\newtheorem{theorem}{Theorem}[section]
\newtheorem{lemma}[theorem]{Lemma}
\newtheorem{proposition}[theorem]{Proposition}
\newtheorem{corollary}[theorem]{Corollary}
\theoremstyle{definition}
\newtheorem{remark}[theorem]{Remark}
\numberwithin{equation}{section}
\newcommand{\blankbox}[2]
\begin{document}
\title[embedding relations between $\alpha$-modulation spaces]{Full
characterization of embedding relations between $\alpha$-modulation spaces }
\author{WEICHAO GUO}
\address{School of Mathematical Sciences, Xiamen University, Xiamen, 361005,
P.R.China}
\email{weichaoguomath@gmail.com}
\author{DASHAN FAN}
\address{Department of Mathematics, University of Wisconsin-Milwaukee,
Milwaukee, WI 53201, USA}
\email{fan@uwm.edu}
\author{HUOXIONG WU}
\address{School of Mathematical Sciences, Xiamen University, Xiamen, 361005,
P.R.China}
\email{huoxwu@xmu.edu.cn}
\author{GUOPING ZHAO}
\address{School of Applied Mathematics, Xiamen University of Technology,
Xiamen, 361024, P.R.China}
\email{guopingzhaomath@gmail.com}
\subjclass[2000]{42B35.}
\keywords{Fourier multipliers; modulation space; $\alpha$-modulation space;
embedding; characterization. }

\begin{abstract}
In this paper, we consider the embedding relations between any two $\alpha$%
-modulation spaces. Based on an observation that the $\alpha$-modulation
space with smaller $\alpha$ can be regarded as a corresponding $\alpha$%
-modulation space with larger $\alpha$, we give a complete characterization
of the Fourier multipliers between $\alpha$-modulation spaces with different
$\alpha$. Then we establish a full version of optimal embedding relations
between $\alpha$-modulation spaces.
\end{abstract}

\maketitle

%\thanks{$^*$The corresponding author}

%\footnote{The second author is the corresponding author.}%

\section{INTRODUCTION}

As we know, the decomposition method on frequency plays an important role in
the study of function spaces and their applications. Among many others, two
basic types of decomposition are used most frequently. One is the uniform
decomposition and the other is the dyadic decomposition. The corresponding
function spaces associated with these two decompositions are the modulation
spaces and the Besov spaces, respectively.

The modulation spaces, introduced by Feichtinger \cite{Feichtinger} in 1983,
was firstly defined by the short-time Fourier transform. We refer the reader
to see \cite{Feichtinger_Survey,Feichtinger,Grochenig,Wang_book} for some basic properties and
applications of the modulation spaces, as well as their historical
developments. Particularly, there is an equivalent definition of modulation
space using the uniform decomposition on the frequency plane. On the other
hand, it is well known that the Besov space $B_{p,q}^{s}$ (see \cite%
{Tribel_83}), constructed by the dyadic decomposition on frequency plane,
 is also a popular working frame in the fields of harmonic
analysis and partial differential equations.

In the eighties of last century, a so-called $\alpha$-covering on the
frequency plane was found in \cite{Feichtinger_I,Feichtinger_II}. This
covering is an intermediate decomposition method between the uniform
decomposition and the dyadic decomposition. Applying the $\alpha$-covering
to the frequency plane, Gr\"{o}bner \cite{Grobner_introduction} introduced
the $\alpha $-modulation spaces $M_{p,q}^{s,\alpha }$ with respect to the
parameters $\alpha \in \lbrack 0,1]$. The space $M_{p,q}^{s,\alpha }$
coincides with the modulation space $M_{p,q}^{s}$ when $\alpha=0$, and, in
some sense, it coincides with the Besov space $\ B_{p,q}^{s}$ when $\alpha
=1 $ (see \cite{Grobner_introduction}). So, for the sake of convenience, we
can view the Besov space as a special $\alpha$-modulation space and use $%
M_{p,q}^{s,1}$ to denote the inhomogeneous Besov space $B_{p,q}^{s}$.\bigskip

In the last ten years, the $\alpha $-modulation space received extensive
attention. Its many algebraic properties and geometric characterizations
were discovered, and many of its applications were established. For the
details, the reader can see \cite{Borup_Nielsen, Forasier} for the Banach
frames of $\alpha $-modulation spaces, \cite{Borup,Borup_Nielsen_operator,Zhao_Guo_Nolinear,Zhao_Guo_MathN}
for the boundedness of certain operators in the frame of
$\alpha $-modulation spaces.
We also refer the reader to \cite{Guo_Zhao_interpolation, Wang_Han}
for the study of some fundamental properties about $\alpha$-modulation spaces.
Among many features of the $\alpha $-modulation spaces,
a particularly interesting subject is the embedding between two different $%
\alpha $-modulation spaces. As an analogy of the Sobolev embedding on the
Lebesgue spaces, the embedding among the different $\alpha $-modulation
spaces plays a notable role in the study of partial differential equations
and in theory of function spaces. The research for embedding relation
between $\alpha _{1}$-modulation and $\alpha _{2}$-modulation spaces goes
back to Gr\"{o}bner's thesis \cite{Grobner_introduction}, where he
considered the case $1\leq p,q\leq \infty $. More sharp results were
established in \cite{Toft_embedding}, in which Toft and Wahlberg obtained
some partial sufficient conditions, as well as some partial necessary
conditions for such embedding between the $\alpha $-modulation spaces (One
also can see \cite{Sugimoto_Tomita_inlusion, Toft_Continunity, WZG_JFA_2006} for
the embedding between modulation and Besov spaces, and \cite%
{Kobayashi_inclusion} for the embedding between modulation spaces
and Sobolev spaces).
Especially, the embedding relations between $M_{p,q}^{s_{1},\alpha _{1}}$ and $M_{p,q}^{s_{2},\alpha _{2}}$
has been completely determined by Wang and Han in \cite{Wang_Han}. Their result can
be stated in the following proposition.

\begin{proposition}[\textbf{\protect\cite{Wang_Han}, Theorem 4.1 and Theorem 4.2}]
\label{old embedding} Let $0<p,q\leq \infty$, $s_i\in \mathbb{R}$, $\alpha_i
\in [0,1]$ for $i=1,2$. Then
\begin{equation}
M_{p,q}^{s_1,\alpha_1} \subset M_{p,q}^{s_2,\alpha_2}
\end{equation}
holds if and only if
\begin{equation*}
s_2+0\vee[n(\alpha_2-\alpha_1)(1/p-1/q)]\vee[n(\alpha_2-\alpha_1)(1-1/p-1/q)]%
\leq s_1,
\end{equation*}
where the notation \ $a\vee b$ denotes the maximum between $a$ and $b$.
\end{proposition}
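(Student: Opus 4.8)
The plan is to prove the two implications separately, reading the hypothesis as the conjunction of three scalar inequalities, one for each argument of the right-hand maximum. Write $\langle\xi\rangle=(1+|\xi|^2)^{1/2}$, and for $\alpha\in[0,1]$ let $\{\Box_k^{\alpha}\}_k$ be the decomposition operators attached to an $\alpha$-covering $\{Q_k^{\alpha}\}_k$ whose member at frequency $|\xi|\sim R$ has diameter $\sim R^{\alpha}$ and centre $\xi_k$, so that $\|f\|_{M_{p,q}^{s,\alpha}}\sim(\sum_k\langle\xi_k\rangle^{sq}\|\Box_k^{\alpha}f\|_{L^p}^q)^{1/q}$. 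The case $\alpha_1=\alpha_2$ is the elementary monotonicity $\|f\|_{M_{p,q}^{s_2,\alpha}}\lesssim\|f\|_{M_{p,q}^{s_1,\alpha}}$ for $s_2\le s_1$, so I treat $\alpha_1<\alpha_2$ in detail and indicate at the end the change needed for $\alpha_1>\alpha_2$.

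For necessity I would exhibit three families of test functions at a high frequency $|\xi|\sim R\to\infty$, concentrated in a single $\alpha_2$-block $Q$, which at that scale contains $M\sim R^{n(\alpha_2-\alpha_1)}$ of the smaller $\alpha_1$-blocks. A single smooth bump supported in one $\alpha_1$-block yields only $s_2\le s_1$, i.e.\ the term $0$. Taking $\widehat f=\mathbf 1_Q$ (a solid bump filling $Q$) makes the $\alpha_1$-pieces coincide in physical space, and comparing $\|f\|_{M_{p,q}^{s_2,\alpha_2}}\sim R^{s_2}R^{n\alpha_2/p'}$ with $\|f\|_{M_{p,q}^{s_1,\alpha_1}}\sim R^{s_1}M^{1/q}R^{n\alpha_1/p'}$ forces $s_2+n(\alpha_2-\alpha_1)(1/p'-1/q)\le s_1$, the third term. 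Dually, spreading one $\alpha_1$-bump into each sub-block but placing the $M$ physical bumps at pairwise $R^{-\alpha_1}$-separated centres makes them $L^p$-disjoint, so the $\alpha_2$-norm gains $M^{1/p}$ instead of $M^{1/q}$, and the same comparison now forces $s_2+n(\alpha_2-\alpha_1)(1/p-1/q)\le s_1$, the second term. The conjunction of the three is exactly the stated condition.

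For sufficiency I would use the key observation that the $\alpha_1$-covering refines the $\alpha_2$-covering: each $Q_k^{\alpha_2}$ is a union of $\sim\langle\xi_k\rangle^{n(\alpha_2-\alpha_1)}$ adjacent $\alpha_1$-blocks, so that $\Box_k^{\alpha_2}f=\sum_{j\in J_k}\Box_j^{\alpha_1}\Box_k^{\alpha_2}f$ with the index sets $J_k$ boundedly overlapping. This reduces the embedding to the single-block estimate
\begin{equation*}
\|g\|_{L^p}\lesssim M^{\,0\vee(1/p-1/q)\vee(1/p'-1/q)}\Big(\sum_{j}\|\Box_j^{\alpha_1}g\|_{L^p}^q\Big)^{1/q}
\end{equation*}
for $g$ with frequency support in a ball of radius $\rho$ split into $M=(\rho/\sigma)^n$ sub-balls of radius $\sigma$; feeding this into the $\ell^q$-sum over $k$ and absorbing $M^{\theta}=\langle\xi_k\rangle^{n(\alpha_2-\alpha_1)\theta}$ into the gap $\langle\xi_k\rangle^{s_1-s_2}$ (legitimate precisely under the hypothesis, since $\langle\xi_j\rangle\sim\langle\xi_k\rangle$ on $J_k$) closes the argument.

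The hard part is the sharp single-block estimate, which I would obtain by viewing summation $T:(g_j)_j\mapsto\sum_j g_j$ as a map $\ell^q(L^p)\to L^p$ and interpolating. At the points $(1/p,1/q)\in\{(1,1),(0,1),(\tfrac12,\tfrac12)\}$ the norm is $\lesssim1$ (triangle inequality, and at $(\tfrac12,\tfrac12)$ Plancherel together with the bounded overlap of the frequency supports), while at $(1,0)$ and $(0,0)$ the crude bound $\|\sum_{j=1}^M g_j\|\le M\max_j\|g_j\|$ gives norm $\lesssim M$. Because the logarithm of the operator norm is convex in $(1/p,1/q)$, interpolating within each region on which the envelope $0\vee(1/p-1/q)\vee(1/p'-1/q)$ is affine, from the vertices just listed, produces that envelope as an upper bound for the $M$-exponent on the whole unit square, matching the test functions and hence sharp. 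When $\alpha_1>\alpha_2$ the roles reverse and one instead needs the transpose estimate for $S:g\mapsto(\Box_j g)_j$, with exponent $0\vee(1/q-1/p)\vee(1/q-1/p')$, proved the same way. The only remaining points are routine: the $L^p$-boundedness and bounded overlap of the $\Box$'s on slightly enlarged blocks, and the quasi-Banach range $0<p,q<1$, where complex interpolation is replaced by its quasi-normed-lattice analogue or by a direct argument using $\|\sum_j h_j\|_{L^p}^p\le\sum_j\|h_j\|_{L^p}^p$.
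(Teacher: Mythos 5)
Your overall strategy is the same one the paper uses to prove its general Theorem \ref{sharpness of embedding} (of which this proposition is the case $p_1=p_2$, $q_1=q_2$): the three test-function families you describe are exactly the paper's $f_l^{\alpha_1}$ (single small bump, giving the term $0$), $f_k^{\alpha_2}$ (a bump filling a whole $\alpha_2$-block, giving $n(\alpha_2-\alpha_1)(1-1/p-1/q)$), and $F_{k,N}=\sum_l T_{Nl}f_l^{\alpha_1}$ (physically separated translates, giving $n(\alpha_2-\alpha_1)(1/p-1/q)$) from the lower-bound part of Lemma \ref{asymptotic estimates}; and your reduction to a sharp single-block estimate, proved by interpolating vertex cases, is the paper's Corollary \ref{reduction of the embedding} combined with the upper-bound part of Lemma \ref{asymptotic estimates}. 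The necessity half of your argument is correct as written.

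There is, however, a genuine gap in the upper bound: your list of interpolation vertices $(1/p,1/q)\in\{(1,1),(0,1),(\tfrac12,\tfrac12),(1,0),(0,0)\}$ is not sufficient to produce the envelope $0\vee(1/p-1/q)\vee(1/p'-1/q)$ on the two regions where that envelope is nonzero. Both of those regions have $(1/p,1/q)=(\tfrac12,0)$ as a corner, and that point does not lie in the convex hull of the vertices at which you have the matching bound. Concretely, at $(1/p,1/q)=(\tfrac14,\tfrac1{10})$ the envelope is $1/p'-1/q=0.65$, but any convex representation of this point by your five vertices must put total weight at least $0.8$ on $(0,0)$ and $(1,0)$ (where your bound is only $M$), so log-convexity yields no better than $M^{0.8}$. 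You need the additional endpoint estimate at $p=2$, $q=\infty$, namely
\begin{equation*}
\Big\|\sum_{j=1}^{M}g_j\Big\|_{L^2}\lesssim M^{1/2}\sup_{j}\|g_j\|_{L^2},
\end{equation*}
which follows from Plancherel and the bounded overlap of the frequency supports (the same almost-orthogonality you already invoke at $(\tfrac12,\tfrac12)$); this is precisely the paper's Case 1.5. With that vertex added, each affine region of the envelope is spanned by vertices where the claimed power of $M$ is attained, and the interpolation closes. A second, smaller caveat: your region analysis is confined to the unit square, whereas the proposition allows $0<p,q\le\infty$; on the rays $1/p\ge 1$ or $1/q\ge 1$ one must re-run the vertex estimates using the quasi-norm inequality $\|\sum_j h_j\|_{L^p}^p\le\sum_j\|h_j\|_{L^p}^p$ together with the convolution bound for band-limited $L^p$ functions with $p<1$ (the paper's Lemma \ref{lemma, convolution}); your closing remark gestures at this but the relevant corner points (e.g. $1/p_1=1/p_2=1/q\ge 1$, the paper's Case 1.3) need to be stated and checked, since complex interpolation of the mixed-norm spaces themselves is what the paper uses (its Lemma \ref{positive result for complex interpolation}) rather than a generic ``quasi-normed-lattice analogue.''
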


We notice that all the previous results about the embedding relation between
$M_{p_{1},q_{1}}^{s_{1},\alpha _{1}}$ and $M_{p_{2},q_{2}}^{s_{2},%
\alpha_{2}} $ concern only some special $p_{1},p_{2},q_{1},q_{2}$. Hence, it
will be of great interest if we establish the sharp embedding $%
M_{p_{1},q_{1}}^{s_{1},\alpha _{1}}\subseteq
M_{p_{2},q_{2}}^{s_{2},\alpha_{2}}$ in full ranges $0<p_{i},q_{i}\leq \infty
,$ $s_{i}\in \mathbb{R},$ and $\alpha _{i}\in \lbrack 0,1]$ for $i=1,2$.
However, the complexity of the methods used in previous works make us quite
difficult to adopt these methods to treat more general situations. A
different and more efficient method might be necessarily introduced. Based
upon these motivation and observation, the main goal of this paper is to
seek a new method to give a complete characterization of the embedding
relation between any two $\alpha$-modulation spaces. The following theorem
is our main result.

\begin{theorem}[\textbf{Sharpness of embedding}]
\label{sharpness of embedding} Let $0<p_{i},q_{i}\leq \infty ,s_{i}\in
\mathbb{R},$ $\alpha _{i}\in \lbrack 0,1]$ \ for $i=1,2$. Then
\begin{equation}
M_{p_{1},q_{1}}^{s_{1},\alpha _{1}}\subseteq M_{p_{2},q_{2}}^{s_{2},\alpha
_{2}}
\end{equation}%
if and only if
\begin{equation}
\begin{cases}
\frac{1}{p_{2}}\leq \frac{1}{p_{1}} \\
s_{2}+R(\mathbf{p},\mathbf{q},\alpha _{1},\alpha _{2})\leq s_{1} \\
\frac{1}{q_{2}}\leq \frac{1}{q_{1}}%
\end{cases}%
,  \label{embedding condition 1}
\end{equation}%
or
\begin{equation}
\begin{cases}
\frac{1}{p_{2}}\leq \frac{1}{p_{1}} \\
s_{2}+R(\mathbf{p},\mathbf{q},\alpha _{1},\alpha _{2})+\frac{n(1-\alpha
_{1}\vee \alpha _{2})}{q_{2}}<s_{1}+\frac{n(1-\alpha _{1}\vee \alpha _{2})}{%
q_{1}} \\
\frac{1}{q_{2}}>\frac{1}{q_{1}}%
\end{cases}
\label{embedding condition 2}
\end{equation}%
holds. Here, we denote
\smaller{
\begin{equation*}\label{notation-R}
R (\mathbf{p},\mathbf{q};\alpha_1,\alpha_2)=
\begin{cases}
 \begin{aligned}
 \left[n\alpha_1(\frac{1}{p_1}-\frac{1}{p_2})\right]\vee
\left[n\alpha_2(1-\frac{1}{p_2})-n\alpha_1(1-\frac{1}{p_1})-n(\alpha_2-\alpha_1)\frac{1}{q_1}\right]
\\
\vee
\left[n(\alpha_2-\alpha_1)(\frac{1}{p_2}-\frac{1}{q_1})+n\alpha_1(\frac{1}{p_1}-\frac{1}{p_2})\right],
~\text{if}~\alpha_1\leq \alpha_2,
\\
 \left[n\alpha_2(\frac{1}{p_1}-\frac{1}{p_2})\right]\vee
\left[n\alpha_2(1-\frac{1}{p_2})-n\alpha_1(1-\frac{1}{p_1})-n(\alpha_2-\alpha_1)\frac{1}{q_2}\right]
\\
\vee
\left[n(\alpha_1-\alpha_2)(\frac{1}{q_2}-\frac{1}{p_1})+n\alpha_2(\frac{1}{p_1}-\frac{1}{p_2})\right],
~\text{if}~\alpha_1> \alpha_2,
\end{aligned}
\end{cases}
\end{equation*}} where $\mathbf{p}=(p_{1},p_{2})$, $\mathbf{q}=(q_{1},q_{2})$%
.
\end{theorem}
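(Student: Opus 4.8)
The plan is to reduce the embedding between two different indices $\alpha_1,\alpha_2$ to an essentially \emph{local} comparison on each cell of the coarser covering, and to read the exponent $R$ off the growth of that local estimate. By symmetry I first assume $\alpha_1\le\alpha_2$ and write $\alpha=\alpha_1\vee\alpha_2=\alpha_2$ for the coarser index; the case $\alpha_1>\alpha_2$ is handled by interchanging the roles of the two coverings, which is exactly why $R$ is recorded with a case distinction in which $1/q_1$ and $1/p_2$ get replaced by $1/q_2$ and $1/p_1$. The starting point is the observation announced in the abstract: since the $\alpha_1$-covering refines the $\alpha_2$-covering, each $\alpha_2$-cell centered at $\xi_k$, of radius $\sim\langle\xi_k\rangle^{\alpha_2}$, is tiled by roughly $N\sim\langle\xi_k\rangle^{n(\alpha_2-\alpha_1)}$ cells of the $\alpha_1$-covering. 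Grouping the $\alpha_1$-pieces according to the $\alpha_2$-cell that contains them rewrites $\|\cdot\|_{M_{p_1,q_1}^{s_1,\alpha_1}}$ as a mixed, $\alpha_2$-indexed norm whose inner part is a sequence norm over the small pieces sitting inside a fixed big cell. This places both norms on the common $\alpha_2$-covering, so that the embedding becomes uniform boundedness of the localized ``identity'' operators there, which is also the mechanism underlying the Fourier-multiplier viewpoint of the paper.

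Next I would split the problem into a local part (one fixed big cell) and a global part (summation over big cells). For sufficiency the local estimate compares, on a single $\alpha_2$-cell, the inner $\ell^{q_1}(L^{p_1})$ norm over its $N\sim\langle\xi_k\rangle^{n(\alpha_2-\alpha_1)}$ constituent $\alpha_1$-cells with a single $L^{p_2}$ norm of the sum. The two elementary tools are Bernstein/Nikolskii inequalities — at the fine scale $\langle\xi_k\rangle^{\alpha_1}$ of one small cell and at the coarse scale $\langle\xi_k\rangle^{\alpha_2}$ of the whole big cell — which govern the passage $L^{p_1}\to L^{p_2}$ under $1/p_2\le 1/p_1$, together with H\"older's inequality controlling the $\ell^{q_1}$ summation over the $N$ small cells. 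Carrying these out for the various distributions of mass among the small cells and tracking the resulting power of $\langle\xi_k\rangle$ produces exactly the threefold maximum defining $R(\mathbf{p},\mathbf{q};\alpha_1,\alpha_2)$: the three entries correspond to mass concentrated in a single small cell, mass spread uniformly over all $N$ cells, and the dual physical-space concentration. Requiring the surviving weight $\langle\xi_k\rangle^{\,s_1-s_2-R}$ to stay bounded then yields $s_2+R\le s_1$, while the two local tools force $1/p_2\le 1/p_1$.

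For necessity I would test the embedding against families matched to these extremal configurations. A single smooth bump supported in one $\alpha_1$-cell at large frequency isolates the fine-scale Bernstein term $n\alpha_1(1/p_1-1/p_2)$ and forces $1/p_2\le1/p_1$; a superposition of equal bumps spread over all the small cells inside one fixed $\alpha_2$-cell isolates the mixed terms and the local $q_1$-behaviour; and a lacunary sum of such blocks over $\alpha_2$-cells at geometrically growing scales detects the global summation and forces $1/q_2\le 1/q_1$ in regime \eqref{embedding condition 1}. In the remaining regime $1/q_2>1/q_1$ the outer summation sits at the critical exponent, so the series over scales converges only if one trades decay from $s$ against the factor $\langle\xi_k\rangle^{\,n(1-\alpha)(1/q_1-1/q_2)}$ coming from the number of cells per scale; this is the source of the \emph{strict} inequality and of the extra terms $n(1-\alpha_1\vee\alpha_2)/q_i$ in \eqref{embedding condition 2}.

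The step I expect to be the main obstacle is precisely this critical regime: the analysis of the scale summation when $1/q_2>1/q_1$, where the borderline convergence must be matched against sharp test functions to produce a strict rather than non-strict inequality, and where the quasi-Banach ranges $p_i,q_i<1$ (no duality, no triangle inequality) force every estimate to be carried out directly. Organizing the several sub-cases of the maximum so that each extremal configuration is realized by an admissible test function, without gaps or overlaps between the regimes $\alpha_1\le\alpha_2$ and $\alpha_1>\alpha_2$, is the delicate bookkeeping at the heart of the argument.
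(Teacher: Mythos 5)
Your plan follows essentially the same route as the paper: regard both spaces through the coarser covering $\alpha_1\vee\alpha_2$ (the paper's Proposition \ref{proposition, viewpoint}), reduce the embedding to a pointwise-multiplier condition on the sequence of local operator norms $\|\Box_k^{\alpha_1\vee\alpha_2}\|_{M_{p_1,q_1}^{0,\alpha_1}\to M_{p_2,q_2}^{0,\alpha_2}}$ (Theorem \ref{characterization of fourier multiplier} and Corollary \ref{reduction of the embedding}), determine the exact growth rate of these norms by testing against the three extremal configurations you describe (Lemma \ref{asymptotic estimates}), and finally compute the multiplier space $\mathcal{M}_p(l_{q_1}^{s_1,\alpha},l_{q_2}^{s_2,\alpha})$, which is $l_\infty^{s_2-s_1,\alpha}$ when $1/q_2\le 1/q_1$ and $l_r^{s_2-s_1,\alpha}$ with $1/r=1/q_2-1/q_1$ otherwise; the counting of cells per dyadic scale is indeed what produces the terms $n(1-\alpha_1\vee\alpha_2)/q_i$ and the strict inequality in \eqref{embedding condition 2}. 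Your lower-bound test functions and your explanation of the $q_1$ versus $q_2$ asymmetry in $R$ also match the paper.

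There is, however, one concrete gap: for the sharp \emph{upper} bound of the local operator norm, ``Bernstein/Nikolskii plus H\"older over the $N$ small cells'' does not suffice in the interior of the parameter region. For instance, with $\alpha_1\le\alpha_2$ and $1<p_2=q_1<2$, the only way to pass from the $N$ pieces $\Box_l^{\alpha_1}\Box_k^{\alpha_2}f$ to $\|\Box_k^{\alpha_2}f\|_{L^{p_2}}$ by the triangle inequality and H\"older costs a factor $N^{1-1/q_1}=2^{jn(\alpha_2-\alpha_1)(1-1/q_1)}$ beyond the claimed exponent $A_1=n\alpha_1(1/p_1-1/p_2)$; the loss disappears only at the corners $p_2=q=2$ (Plancherel), $q\le 1$ ($\ell^1$ or the quasi-Banach convolution Lemma \ref{lemma, convolution}), and $p_2=\infty$. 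The paper closes exactly this gap by proving the estimate only at such endpoint configurations and then invoking complex interpolation of $\alpha$-modulation spaces (Lemma \ref{positive result for complex interpolation}, valid in the quasi-Banach range), after partitioning the region $\{1/p_2\le 1/p_1\}$ into three zones on each of which one of the $A_i$ realizes the maximum. You should either incorporate this interpolation step or supply an orthogonality argument replacing it; without one of these your sufficiency proof does not reach the sharp exponent $R$. A smaller point: your local comparison tacitly replaces the two-exponent norm $M_{p_2,q_2}^{0,\alpha_2}$ by a single $L^{p_2}$ norm on the big cell; this is legitimate because $\Box_k^{\alpha_1\vee\alpha_2}f$ meets only boundedly many cells of the coarser covering, but that reduction (which is what lets $R$ depend on only one of $q_1,q_2$) should be stated explicitly.
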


\begin{remark} Obviously, our result gives a complete characterization of the embedding
relations between any two $\alpha$-modulation spaces, which is an essential improvement and extension to Theorem A. We also remark that a similar embedding problem is studied
by Voigtlaender in \cite{decomposition space}, in which a more general framework, the decomposition spaces, is considered. However, our work has its own interesting.
\end{remark}

As we mentioned previously, neither the method in \cite{Toft_embedding}, nor
the proof used in proving Proposition \ref{old embedding} seems adoptable in
our proof on this more general situation. Thus, we will use a new and more
efficient approach to achieve our target. Below, we outline the strategy of
our proof for Theorem \ref{sharpness of embedding}.

Firstly, in Section 3, we will give a characterization of Fourier
multipliers between $\alpha $-modulation spaces by means of the
corresponding Wiener amalgam spaces. To be precise, in Theorem \ref%
{characterization of fourier multiplier}, we show that a Fourier multiplier $%
T_{m}$ is bounded from $M_{p_{1},q_{1}}^{s_{1},\alpha _{1}}$ to $\
M_{p_{2},q_{2}}^{s_{2},\alpha _{2}}$ if and only if the norm sequence
\begin{equation*}
\left\{ \Im _{k}\right\} =\left\{ \Vert \Box _{k}^{\alpha _{1}\vee \alpha
_{2}}T_{m}\Vert _{M_{p_{1},q_{1}}^{0,\alpha _{1}}\rightarrow
M_{p_{2},q_{2}}^{0,\alpha _{2}}}\right\} _{k\in \mathbb{Z}^{n}}
\end{equation*}%
is a pointwise multiplier from the sequence space $l_{q_{1}}^{s_{1},\alpha
_{1}\vee \alpha _{2}}$ to the sequence space $l_{q_{2}}^{s_{2},\alpha
_{1}\vee \alpha _{2}}$, where $\left\{ \Box _{k}^{\alpha _{1}\vee \alpha
_{2}}T_{m}\right\} $ is the sequence of localizations of $T_{m}$ based on
the $\alpha $-covering on the frequency plane. Since the embedding $%
M_{p_{1},q_{1}}^{s_{1},\alpha _{1}}\subset $\ $M_{p_{2},q_{2}}^{s_{2},\alpha
_{2}}$ can be viewed as the boundedness of the identity operator mapping
from $M_{p_{1},q_{1}}^{s_{1},\alpha _{1}}\ \ $to $\
M_{p_{2},q_{2}}^{s_{2},\alpha _{2}},$ it is reduced from Theorem \ref%
{characterization of fourier multiplier} that the embedding relation
$M_{p_{1},q_{1}}^{s_{1},\alpha _{1}}\subset M_{p_{2},q_{2}}^{s_{2},\alpha_{2}}$
holds if and only if the norm sequence
\begin{equation*}
\left\{ \mathfrak{R}_{k}\right\} _{k\in \mathbb{Z}^{n}}=\left\{ \Vert \Box
_{k}^{\alpha _{1}\vee \alpha _{2}}\Vert _{M_{p_{1},q_{1}}^{0,\alpha
_{1}}\rightarrow M_{p_{2},q_{2}}^{0,\alpha _{2}}}\right\} _{k\in \mathbb{Z}%
^{n}}
\end{equation*}%
is a pointwise multiplier from the sequence space $l_{q_{1}}^{s_{1},\alpha
_{1}\vee \alpha _{2}}\ $to the sequence space $l_{q_{2}}^{s_{2},\alpha
_{1}\vee \alpha _{2}}.$ Thus, to prove Theorem \ref{sharpness of embedding},
it suffices to find the precise asymptotic estimate of the sequence $\left\{
\mathfrak{R}_{k}\right\}_{k\in \mathbb{Z}^n} $. This task is quite technical, and it will be
finished in Lemma \ref{asymptotic estimates} for the case $q_{1}=q_{2}=q$ with
the help of complex interpolations and a constructive proof. The asymptotic
estimate for the general case can be finally obtained by invoking Lemma 4.1
and some technical treatments.
By the asymptotic estimates of local operators between $\alpha $-modulation spaces,
we can verify the sufficient and necessary conditions simultaneously, which completes the proof of Theorem \ref{sharpness of embedding}.

We explain the organization of this paper. In Section 2, we give some
definitions of function spaces treated in this paper. We also collect some
basic properties used in our proof. We establish the framework for the proof
of Theorem \ref{sharpness of embedding} in Section 3. In Section 3, we first
use Proposition \ref{proposition, viewpoint} to illustrate our viewpoint
about $\alpha $-modulation spaces with different $\alpha $. By the spirit of
this viewpoint, we give a complete characterization of Fourier multipliers
between any two $\alpha $-modulation spaces. Then we obtain the reduction of
Theorem \ref{sharpness of embedding}. In Section 4, we establish some
asymptotic estimates of local operators between $\alpha $-modulation spaces,
which is the quantity part for the proof of Theorem \ref{sharpness of
embedding}. Based on the preparatory work in Section 3 and Section 4,
we will complete the proof of our main theorem in Section 5. Also in Section 5,
we will make some comments about this paper.

\section{PRELIMINARIES}

We recall some notations. Let $C$ be a positive constant that may depend on $%
n,p_i,q_i,s_i,\alpha.$ The notation $X\lesssim Y$ denotes the statement that
$X\leq CY$, the notation $X\sim Y$ means the statement $X\lesssim Y \lesssim
X$, and the notation $X\simeq Y$ denotes the statement $X=CY$. For a
multi-index $k=(k_1,k_2,...k_n)\in \mathbb{Z}^{n}$, we denote $|k|_{\infty}:
=\max_{i=1,2...n}|k_i|$, and $\langle k\rangle: =(1+|k|^{2})^{\frac{1%
}{2}}.$

Let $\mathscr {S}:= \mathscr {S}(\mathbb{R}^{n})$ be the Schwartz space and $%
\mathscr {S}^{\prime }:=\mathscr {S}^{\prime }(\mathbb{R}^{n})$ be the space
of tempered distributions. We define the Fourier transform $\mathscr {F}f$
and the inverse Fourier transform $\mathscr {F}^{-1}f$ of $f\in \mathscr {S}(%
\mathbb{R}^{n})$ by
\begin{equation*}
\mathscr {F}f(\xi)=\hat{f}(\xi)=\int_{\mathbb{R}^{n}}f(x)e^{-2\pi ix\cdot
\xi}dx ~~ , ~~ \mathscr {F}^{-1}f(x)=\hat{f}(-x)=\int_{\mathbb{R}%
^{n}}f(\xi)e^{2\pi ix\cdot \xi}d\xi.
\end{equation*}

We recall some definitions of the function spaces treated in this paper. For
the convenience of doing calculation, we give the definition of $\alpha $%
-modulation spaces based on the decomposition method. This definition is
equivalent to the one in \cite{Grobner_introduction}. Suppose that $c>0$ and
$C>0$ are two appropriate constants. Choose a sequence of Schwartz functions
$\{\eta _{k}^{\alpha }\}_{k\in \mathbb{Z}^{n}}$ satisfying
\begin{equation}
\begin{cases}
|\eta _{k}^{\alpha }(\xi )|\geq 1,~\text{if}~|\xi -\langle k\rangle ^{\frac{%
\alpha }{1-\alpha }}k|<c\langle k\rangle ^{\frac{\alpha }{1-\alpha }}; \\
\mathbf{supp}\eta _{k}^{\alpha }\subset \{\xi :|\xi -\langle k\rangle ^{%
\frac{\alpha }{1-\alpha }}k|<C\langle k\rangle ^{\frac{\alpha }{1-\alpha }%
}\}; \\
\sum_{k\in \mathbb{Z}^{n}}\eta _{k}^{\alpha }(\xi )\equiv 1,\forall \xi \in
\mathbb{R}^{n}; \\
|\partial ^{\gamma }\eta _{k}^{\alpha }(\xi )|\leq C_{\alpha}\langle
k\rangle ^{-\frac{\alpha |\gamma |}{1-\alpha }},\forall \xi \in \mathbb{R}%
^{n},\gamma \in (\mathbb{Z}^{+}\cup \{0\})^{n}.%
\end{cases}%
\end{equation}%
Then $\{\eta _{k}^{\alpha }(\xi )\}_{k\in \mathbb{Z}^{n}}$ constitutes a
smooth decomposition of $\mathbb{R}^{n}$. The frequency decomposition
operators associated with the above function sequence can be defined by
\begin{equation}
\Box _{k}^{\alpha }:=\mathscr {F}^{-1}\eta _{k}^{\alpha }\mathscr {F}
\end{equation}%
for $k\in \mathbb{Z}^{n}$. Let $0<p,q\leq \infty $, $s\in \mathbb{R}$, $%
\alpha \in \lbrack 0,1)$. The $\alpha $-modulation space associated with the
above decomposition is defined by
\begin{equation}
M_{p,q}^{s,\alpha }(\mathbb{R}^{n})=\{f\in \mathscr {S}^{\prime }(\mathbb{R}%
^{n}):\Vert f\Vert _{M_{p,q}^{s,\alpha }(\mathbb{R}^{n})}=\left( \sum_{k\in
\mathbb{Z}^{n}}\langle k\rangle ^{\frac{sq}{1-\alpha }}\Vert \Box
_{k}^{\alpha }f\Vert _{p}^{q}\right) ^{\frac{1}{q}}<\infty \}
\end{equation}%
with the usual modifications when $q=\infty $. For simplicity, we denote $%
M_{p,q}^{s}=M_{p,q}^{s,0}$ and $\eta _{k}(\xi )=\eta _{k}^{0}(\xi )$.

\begin{remark}
We recall that the above definition is independent of the choice of exact $%
\eta _{k}^{\alpha }$ (see \cite{Wang_Han}). Also, for sufficiently small $%
\delta >0$, one can construct a function sequence $\{\eta _{k}^{\alpha }(\xi
)\}_{k\in \mathbb{Z}^{n}}$ such that $\eta _{k}^{\alpha }(\xi )=1$ and $\eta
_{k}^{\alpha }(\xi )\eta _{l}^{\alpha }(\xi )=0$ if $k\neq l$, when $\xi $
lies in the ball $B(\langle k\rangle ^{\frac{1-\alpha }{\alpha }}k,\langle
k\rangle ^{\frac{1-\alpha }{\alpha }}\delta )$ (see \cite{Borup_Nielsen,
Forasier, Guo_Chen}).
\end{remark}

For $\alpha\in [0,1)$, we set
\begin{equation}
\Lambda_k^{\alpha}=\{l\in \mathbb{Z}^n:~\Box_l^{\alpha}\circ
\Box_k^{\alpha}\neq 0\}
\end{equation}
and
\begin{equation}
\Lambda_k^{\alpha,\ast}=\{l\in \mathbb{Z}^n:~\Box_l^{\alpha}\circ
\Box_m^{\alpha}\neq 0~\text{for}~\text{some}~m\in \Lambda_k^{\alpha}\}.
\end{equation}
Denote
\begin{equation}
\Box_k^{\alpha,\ast}=\sum_{l\in \Lambda_k^{\alpha}}\Box_l^{\alpha}, \hspace{%
6mm} \eta_k^{\alpha,\ast}=\sum_{l\in \Lambda_k^{\alpha}}\eta_l^{\alpha}.
\end{equation}

For $\alpha_1, \alpha_2 \in (0,1)$, $k\in \mathbb{Z}^n$, we denote
\begin{equation}
\Gamma_k^{\alpha_1,\alpha_2}=\{l\in \mathbb{Z}^n:~\Box_k^{\alpha_2}\circ
\Box_l^{\alpha_1}\neq 0 \},~ \widetilde{\Gamma_k^{\alpha_1,\alpha_2}}=\{l\in
\mathbb{Z}^n:~\Box_k^{\alpha_2}\circ \Box_l^{\alpha_1}=\Box_l^{\alpha_1} \}.
\end{equation}

To define the Besov space, we introduce the dyadic decomposition of $\mathbb{%
R}^{n}$. Let $\varphi(\xi)$ be a smooth bump function supported in the ball $%
\{\xi: |\xi|<\frac{3}{2}\}$ and be equal to 1 on the ball $\{\xi: |\xi|\leq
\frac{4}{3}\}$. %Denote
%\begin{equation}\label{decompositon function of Besov space}
%\phi(\xi)=\varphi(\xi)-\varphi(2\xi),
%\end{equation}
%and a function sequence
%\begin{equation}
%\begin{cases}
%\phi_j(\xi)=\phi(2^{-j}\xi),~j\in \mathbb{N}
%\\
%\phi_0(\xi)=1-\sum_{j\in \mathbb{N}}\phi_j(\xi)=\varphi(\xi).
%\end{cases}
%\end{equation}
For integers $j\in \mathbb{Z}$, we define the Littlewood-Paley operators
\begin{equation}
\begin{split}
\widehat{\Delta_jf}&=\left(\varphi(2^{-j}\xi)-\varphi(2^{-j+1}\xi)\right)%
\widehat{f}(\xi),\ j\geq 0, \\
\widehat{\Delta_0f}&=\varphi(\xi)f(\xi).
\end{split}%
\end{equation}
Let $0< p,q\leq\infty$ and $s\in \mathbb{R}$. For $f\in\mathscr {S}^{\prime
} $, set
\begin{equation}
\|f\|_{B_{p,q}^s}=\left(\sum_{j=0}^{\infty}2^{jsq}\|\triangle_jf\|_{L^p}^q
\right)^{1/q}.
\end{equation}
The (inhomogeneous) Besov space is the space of all tempered distributions $%
f $ for which the quantity $\|f\|_{B_{p,q}^s}$ is finite.

Suppose $0<q\leq \infty $, $s\in \mathbb{R}$ and $\alpha \in \lbrack 0,1)$.
Let $\{\lambda _{k}\}_{k\in \mathbb{Z}^{n}}$ denote a sequence of complex
numbers. Set
\begin{equation}
\Vert \{\lambda _{k}\}\Vert _{l_{q}^{s,\alpha }}=%
\begin{cases}
\left( \sum_{k\in \mathbb{Z}^{n}}^{{}}\langle k\rangle ^{\frac{sq}{1-\alpha }%
}|\lambda _{k}|^{q}\right) ^{\frac{1}{q}}~ & \text{if}~0<q<\infty , \\
\sup_{k\in \mathbb{Z}^{n}}\left( \langle k\rangle ^{\frac{s}{1-\alpha }%
}|\lambda _{k}|\right) ~ & \text{if}~q=\infty .%
\end{cases}%
\end{equation}%
We use $l_{q}^{s,\alpha }$ to denote the set of all sequences $\{\lambda
_{k}\}_{k\in \mathbb{Z}^{n}}$ such that $\Vert \{\lambda _{k}\}\Vert
_{l_{q}^{s,\alpha }}<\infty $.

Similarly, we use $l_{q}^{s,1}$ to denote the set of all sequences $%
\{\lambda _{j}\}_{j\in \mathbb{N}}$ such that
\begin{equation}
\Vert \{\lambda _{j}\}\Vert _{l_{q}^{s,1}}=%
\begin{cases}
\left( \sum_{j\in \mathbb{N}}2^{js}|a_{j}|^{q}\right) ^{\frac{1}{q}}~ &
\text{if}~0<q<\infty , \\
\sup_{j\in \mathbb{N}}\left( 2^{js}|a_{j}|\right) ~ & \text{if}~q=\infty
\end{cases}%
\end{equation}%
is finite.

Now, we define the space of pointwise multipliers between sequence spaces.
For $\alpha \in \lbrack 0,1)$, we set
\begin{equation}
\mathcal{M}_{p}(l_{q_{1}}^{s_{1},\alpha },l_{q_{2}}^{s_{2},\alpha })=\left\{
\{a_{k}\}_{k\in \mathbb{Z}^{n}}:~\Vert \{a_{k}\lambda _{k}\}\Vert
_{l_{q_{2}}^{s_{2},\alpha }}\lesssim \Vert \{\lambda _{k}\}\Vert
_{l_{q_{1}}^{s_{1},\alpha }}\text{ for all }\{\lambda _{k}\}\in
l_{q}^{s_1,\alpha }\right\} .
\end{equation}
For $\alpha =1$, we set
\begin{equation}
\mathcal{M}_{p}(l_{q_{1}}^{s_{1},1 },l_{q_{2}}^{s_{2},1 })=\left\{
\{a_{j}\}_{j\in \mathbb{N}}:~\Vert \{a_{j}\lambda _{j}\}\Vert
_{l_{q_{2}}^{s_{2},1 }}\lesssim \Vert \{\lambda _{j}\}\Vert
_{l_{q_{1}}^{s_{1},1 }}\text{ for all }\{\lambda _{j}\}\in
l_{q}^{s_1,1 }\right\} .
\end{equation}%
Denote
\begin{equation}
\Vert \{a_{k}\}|~\mathcal{M}_{p}(l_{q_{1}}^{s_{1},\alpha
},l_{q_{2}}^{s_{2},\alpha })\Vert =\Vert \{a_{k}\}\Vert
_{l_{q_{1}}^{s_{1},\alpha }\rightarrow l_{q_{2}}^{s_{2},\alpha
}}=\sup_{\Vert \{\lambda _{k}\}\Vert _{l_{q_{1}}^{s_{1},\alpha }=1}}\Vert
\{a_{k}\lambda _{k}\}\Vert _{l_{q_{2}}^{s_{2},\alpha }}.
\end{equation}

We list the following lemmas which will be used frequently in our proof.

\begin{lemma}[\textbf{Embedding of $L^{p}$ with
Fourier compact support, \cite{Tribel_83}}]
\label{embedding of Lp with Fourier compact support} Let $0<p_{1}\leq
p_{2}\leq \infty $ and assume $supp{\hat{f}}\subseteq B(0,R)$. We have
\begin{equation}
\Vert f\Vert _{L^{p_{2}}}\leq CR^{n(\frac{1}{p_{1}}-\frac{1}{p_{2}})}\Vert
f\Vert _{L^{p_{1}}},
\end{equation}%
where $C$ is independent of $f$.
\end{lemma}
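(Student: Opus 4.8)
The plan is to prove this Bernstein--Nikolskii type inequality in two movements: first a scaling reduction to $R=1$, and then the band-limited estimate on the unit ball, which I split according to whether $p_{1}\geq 1$ or $p_{1}<1$.

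First I would reduce to the case $R=1$ by dilation. Setting $g(x)=f(x/R)$, one has $\widehat{g}(\xi)=R^{n}\widehat{f}(R\xi)$, so $\operatorname{supp}\widehat{g}\subseteq B(0,1)$, while $\|g\|_{L^{p}}=R^{n/p}\|f\|_{L^{p}}$ for every $0<p\leq\infty$. Hence the desired inequality for $f$ is equivalent to $\|g\|_{L^{p_{2}}}\leq C\|g\|_{L^{p_{1}}}$ with a constant independent of $g$, and the factor $R^{n(1/p_{1}-1/p_{2})}$ emerges precisely from comparing the two scaling relations. From now on $\operatorname{supp}\widehat{f}\subseteq B(0,1)$.

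Next, fix $\psi\in C_{c}^{\infty}$ with $\psi\equiv 1$ on $B(0,1)$ and set $\phi=\mathscr{F}^{-1}\psi$, a Schwartz function. Since $\widehat{f}=\psi\widehat{f}$, we have the reproducing identity $f=\phi\ast f$. The core step is the endpoint bound $\|f\|_{L^{\infty}}\lesssim\|f\|_{L^{p_{1}}}$. For $p_{1}\geq 1$ this is immediate from H\"older's inequality, $|f(x)|=|\phi\ast f(x)|\leq\|\phi\|_{L^{p_{1}'}}\|f\|_{L^{p_{1}}}$, the norm $\|\phi\|_{L^{p_{1}'}}$ being finite since $\phi$ is Schwartz. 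For $0<p_{1}<1$ Young's inequality is unavailable, and I would instead run a self-improving argument: using $f=\phi\ast f$ together with $|f(y)|^{1-p_{1}}\leq\|f\|_{L^{\infty}}^{1-p_{1}}$,
\[
|f(x)|\leq\int|\phi(x-y)|\,|f(y)|^{p_{1}}|f(y)|^{1-p_{1}}\,dy\leq\|f\|_{L^{\infty}}^{1-p_{1}}\|\phi\|_{L^{\infty}}\|f\|_{L^{p_{1}}}^{p_{1}},
\]
so that taking the supremum over $x$ and cancelling one power of $\|f\|_{L^{\infty}}$ yields $\|f\|_{L^{\infty}}\leq\|\phi\|_{L^{\infty}}^{1/p_{1}}\|f\|_{L^{p_{1}}}$.

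Finally, I would upgrade the endpoint bound to arbitrary $p_{2}\in[p_{1},\infty]$ by the elementary interpolation $\int|f|^{p_{2}}\leq\|f\|_{L^{\infty}}^{p_{2}-p_{1}}\int|f|^{p_{1}}$, which gives $\|f\|_{L^{p_{2}}}\leq\|f\|_{L^{\infty}}^{1-p_{1}/p_{2}}\|f\|_{L^{p_{1}}}^{p_{1}/p_{2}}\lesssim\|f\|_{L^{p_{1}}}$, the case $p_{2}=\infty$ being the endpoint bound itself. Undoing the dilation then recovers the stated inequality with a constant depending only on $n,p_{1},p_{2}$. The main obstacle is the range $0<p_{1}<1$: beyond the self-improving estimate above, one must justify a priori that a band-limited $L^{p_{1}}$ distribution is in fact a bounded continuous function, so that the cancellation step is legitimate; this can be arranged by approximating $f$ and passing to the limit, or by invoking the Plancherel--P\'olya--Nikolskii machinery in \cite{Tribel_83}.
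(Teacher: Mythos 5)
The paper does not prove this lemma at all; it is quoted verbatim from Triebel's book \cite{Tribel_83}, so there is no in-paper argument to compare against. Your proof is the standard Plancherel--P\'olya--Nikolskii argument and it is correct in all essentials: the dilation $g(x)=f(x/R)$ cleanly isolates the factor $R^{n(1/p_{1}-1/p_{2})}$, the reproducing identity $f=\phi\ast f$ with $\phi=\mathscr{F}^{-1}\psi$ gives the endpoint $L^{p_{1}}\to L^{\infty}$ bound (by H\"older when $p_{1}\geq 1$, by the self-improving trick when $p_{1}<1$), and the elementary inequality $\int|f|^{p_{2}}\leq\|f\|_{L^{\infty}}^{p_{2}-p_{1}}\int|f|^{p_{1}}$ closes the gap for intermediate $p_{2}$. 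The one point that genuinely requires care is the cancellation of $\|f\|_{L^{\infty}}^{1-p_{1}}$ in the quasi-Banach case, which is only legitimate once one knows a priori that $\|f\|_{L^{\infty}}<\infty$; you correctly flag this and the standard fixes (a density/limiting argument, or the Peetre maximal function route as in \cite{Tribel_83}) do resolve it, so I would not count it as a gap so much as a step you have deferred to the literature. Given that the authors themselves simply cite the result, your write-up is a perfectly adequate self-contained substitute.
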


\begin{lemma}[\textbf{Convolution in $L^{p}$ with $%
p<1$, \cite{Tribel_83}}]\label{lemma, convolution}
\label{convolution} Let $0<p<1$ and $L_{B(x_{0},R)}^{p}=\{f\in L^{p}(\mathbb{%
R}^{n}):\ \mathrm{supp}\widehat{f}\subset B(x_{0},R)\}$, where $%
B(x_{0},R)=\{x:|x-x_{0}|\leq R\}$. Suppose $f,g\in L_{B(x_{0},R)}^{p}$. Then
there exists a constant $C>0$ which is independent of $x_{0}$ and $R>0$ such
that
\begin{equation*}
\Vert f\ast g\Vert _{p}\leq CR^{n(1/p-1)}\Vert f\Vert _{p}\Vert g\Vert _{p}.
\end{equation*}
\end{lemma}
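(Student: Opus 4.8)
The plan is to prove the inequality by two normalizing reductions followed by a discretization argument for the band-limited convolution. First I would reduce to the case $x_{0}=0$. If $\mathrm{supp}\,\widehat{f},\mathrm{supp}\,\widehat{g}\subset B(x_{0},R)$, write $f(x)=e^{2\pi ix_{0}\cdot x}f_{0}(x)$ and $g(x)=e^{2\pi ix_{0}\cdot x}g_{0}(x)$, so that $\widehat{f_{0}},\widehat{g_{0}}$ are supported in $B(0,R)$. A direct computation gives $(f*g)(x)=e^{2\pi ix_{0}\cdot x}(f_{0}*g_{0})(x)$, hence $\|f*g\|_{p}=\|f_{0}*g_{0}\|_{p}$, $\|f\|_{p}=\|f_{0}\|_{p}$ and $\|g\|_{p}=\|g_{0}\|_{p}$; thus the modulation is harmless and we may assume $x_{0}=0$. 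Next I would reduce to $R=1$ via the dilation $\tilde{f}(x)=f(x/R)$, $\tilde{g}(x)=g(x/R)$, whose Fourier transforms are supported in $B(0,1)$. Tracking the scaling, $\|\tilde{f}\|_{p}=R^{n/p}\|f\|_{p}$, $(\tilde{f}*\tilde{g})(x)=R^{n}(f*g)(x/R)$ and $\|\tilde{f}*\tilde{g}\|_{p}=R^{n+n/p}\|f*g\|_{p}$; feeding these into the normalized estimate $\|\tilde{f}*\tilde{g}\|_{p}\le C\|\tilde{f}\|_{p}\|\tilde{g}\|_{p}$ returns exactly the factor $R^{n(1/p-1)}$. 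So it suffices to prove $\|f*g\|_{p}\le C\|f\|_{p}\|g\|_{p}$ when $\mathrm{supp}\,\widehat{f},\mathrm{supp}\,\widehat{g}\subset B(0,1)$.

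For the normalized estimate I would decompose $\mathbb{R}^{n}$ into the unit cubes $Q_{k}=k+[0,1)^{n}$, $k\in\mathbb{Z}^{n}$, and split $(f*g)(x)=\sum_{k}\int_{Q_{k}}f(x-y)g(y)\,dy$. Since $0<p<1$, the elementary inequality $|\sum_{j}a_{j}|^{p}\le\sum_{j}|a_{j}|^{p}$ applies and, bounding $g$ by its supremum on $Q_{k}$ and using $|Q_{k}|=1$, gives $|(f*g)(x)|^{p}\le\sum_{k}(\sup_{Q_{k}}|g|)^{p}(\sup_{x-Q_{k}}|f|)^{p}$. Integrating in $x$ and using translation invariance, I obtain $\|f*g\|_{p}^{p}\le\big(\sum_{k}(\sup_{Q_{k}}|g|)^{p}\big)\cdot\int_{\mathbb{R}^{n}}(\sup_{u\in x-Q_{0}}|f(u)|)^{p}\,dx$.

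It then remains to bound each factor by $\|g\|_{p}^{p}$ and $\|f\|_{p}^{p}$ respectively, and both are instances of the Plancherel--Polya--Nikolskii (Peetre) maximal inequality for band-limited functions: for $h$ with $\mathrm{supp}\,\widehat{h}\subset B(0,1)$ and $N>n/p$, the maximal function $h_{N}^{*}(x)=\sup_{z}|h(x-z)|(1+|z|)^{-N}$ satisfies $\|h_{N}^{*}\|_{p}\le C\|h\|_{p}$. Controlling $\sup_{Q_{k}}|g|$ and $\sup_{u\in x-Q_{0}}|f(u)|$ by the corresponding values of $g_{N}^{*}$ and $f_{N}^{*}$ (using that $h_{N}^{*}$ varies slowly on unit cubes) yields $\sum_{k}(\sup_{Q_{k}}|g|)^{p}\le C\|g_{N}^{*}\|_{p}^{p}\le C\|g\|_{p}^{p}$ and $\int(\sup_{u\in x-Q_{0}}|f(u)|)^{p}\,dx\le C\|f_{N}^{*}\|_{p}^{p}\le C\|f\|_{p}^{p}$, which completes the proof. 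I expect the main obstacle to be the maximal inequality $\|h_{N}^{*}\|_{p}\le C\|h\|_{p}$ itself, since for $p<1$ it cannot be read off from the Hardy--Littlewood theory directly; the standard route is the reproducing identity $h=h*\Phi$ (with $\widehat{\Phi}=1$ on $B(0,1)$) together with the pointwise bound $h_{N}^{*}(x)\le C(M(|h|^{r})(x))^{1/r}$ for a suitable $r\in(n/N,p)$, after which the ordinary maximal theorem on $L^{p/r}$ (with $p/r>1$) finishes it. Care is also needed to keep every constant independent of $x_{0}$ and $R$, which is precisely what the two reductions guarantee.
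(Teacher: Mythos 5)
The paper does not prove this lemma at all: it is quoted verbatim from Triebel's \emph{Theory of Function Spaces} and used as a black box, so there is no in-paper argument to compare against. Your proposal is, as far as I can check, a correct self-contained proof, and it runs along the same circle of ideas as the classical one (rescaling plus the Plancherel--Polya--Nikolskii/Peetre maximal inequality), with the unit-cube discretization of the convolution integral being a clean way to organize the middle step. The two normalizations are computed correctly: modulation by $e^{2\pi i x_0\cdot x}$ kills $x_0$ without changing any $L^p$ quasi-norm, and the dilation bookkeeping $\Vert\tilde f\Vert_p=R^{n/p}\Vert f\Vert_p$, $\Vert\tilde f\ast\tilde g\Vert_p=R^{n+n/p}\Vert f\ast g\Vert_p$ does produce exactly the factor $R^{n(1/p-1)}$. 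The estimate $\Vert f\ast g\Vert_p^p\le\bigl(\sum_k(\sup_{Q_k}\lvert g\rvert)^p\bigr)\int(\sup_{u\in x-Q_0}\lvert f(u)\rvert)^p\,dx$ is valid (the $p$-triangle inequality holds for infinite sums when $0<p\le 1$, and translation invariance makes the $f$-factor independent of $k$), and both factors are controlled by the Peetre maximal function exactly as you say, since points of $Q_k$ (respectively of $x-Q_0$) are within distance $\sqrt n$ of any reference point, giving $\sup_{Q_k}\lvert g\rvert\le C\inf_{y\in Q_k}g_N^*(y)$. Two small points you should make explicit: first, that $f\ast g$ is well defined at all for $p<1$ requires observing that a band-limited $L^p$ function lies in $L^\infty$ (Nikolskii, i.e.\ Lemma 2.1 of the paper with $p_2=\infty$) and hence in $L^1$, so the convolution integral converges absolutely; second, the maximal inequality $\Vert h_N^*\Vert_p\le C\Vert h\Vert_p$ for $N>n/p$ is genuinely the heavy lifting here, and your sketch of it (reproducing identity $h=h\ast\Phi$ plus the pointwise bound by $(M(\lvert h\rvert^r))^{1/r}$ with $n/N<r<p$) is the standard and correct route, but if you do not want to prove it you are effectively still citing Triebel, just for a different (more fundamental) lemma.
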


\begin{lemma}[\textbf{see \protect\cite{Wang_Han}}]
\label{positive result for complex interpolation} Let $0< p_i, q_i\leq
\infty $, $s_i\in \mathbb{R}$ for $i=1,2$ and $\alpha \in [0,1]$. Then we
have
\begin{equation}
[M_{p_1, q_1}^{s_1, \alpha}, M_{p_2, q_2}^{s_2,
\alpha}]_{\theta}=M_{p_{\theta}, q_{\theta}}^{s_{\theta}, \alpha}
\end{equation}
for $\theta\in (0,1)$, where
\begin{equation*}
\frac{1}{p_{\theta }}=\frac{1-\theta }{p_{1}}+\frac{\theta }{p_{2}},~\frac{1%
}{q_{\theta }}=\frac{1-\theta }{q_{1}}+\frac{\theta }{q_{1}},~s_{\theta
}=(1-\theta )s_{1}+\theta s_{2}.
\end{equation*}
\end{lemma}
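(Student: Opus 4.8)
The plan is to realize each $\alpha$-modulation space as a \emph{retract} of a vector-valued weighted sequence space and then transport the interpolation identity through the retract. Working first with $\alpha\in[0,1)$, I would introduce the mixed-norm space $\ell_q^{s,\alpha}(L^p)$ of all sequences $\{g_k\}_{k\in\mathbb{Z}^n}$ of functions for which
\[
\|\{g_k\}\|_{\ell_q^{s,\alpha}(L^p)}=\left(\sum_{k\in\mathbb{Z}^n}\langle k\rangle^{\frac{sq}{1-\alpha}}\|g_k\|_p^q\right)^{1/q}<\infty,
\]
and define the analysis operator $Af=\{\Box_k^\alpha f\}_{k}$ together with the synthesis operator $B\{g_k\}=\sum_k \Box_k^{\alpha,\ast} g_k$, where $\Box_k^{\alpha,\ast}$ is built from the enlarged multiplier $\eta_k^{\alpha,\ast}$ that equals $1$ on $\mathbf{supp}\,\eta_k^\alpha$. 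The crucial feature is that $A$ and $B$ depend only on the $\alpha$-decomposition and not on $(p,q,s)$, so the \emph{same} pair serves both endpoints of the couple.

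The next step is to verify that $A$ and $B$ are bounded between the corresponding spaces and that $B\circ A=\mathrm{Id}$. The identity is immediate from $\eta_k^{\alpha,\ast}\eta_k^\alpha=\eta_k^\alpha$, which gives $\Box_k^{\alpha,\ast}\Box_k^\alpha f=\Box_k^\alpha f$ and hence $BAf=\sum_k\Box_k^\alpha f=f$. Boundedness of $A$ holds by the very definition of the norms, while boundedness of $B$ follows from the finite-overlap property of the $\alpha$-covering, which ensures that $\Box_l^\alpha\Box_k^{\alpha,\ast}\neq0$ only for $k$ in a uniformly bounded neighborhood of $l$ (with $\langle l\rangle\sim\langle k\rangle$ there), combined with the $L^p$ convolution estimates — Lemma \ref{lemma, convolution} in the quasi-Banach range $p<1$ and Young's inequality for $p\geq1$ — to control each $\|\Box_l^\alpha B\{g_k\}\|_p$ by $\sum_{k\sim l}\|g_k\|_p$. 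Thus $(A,B)$ exhibits $M_{p,q}^{s,\alpha}$ as a retract of $\ell_q^{s,\alpha}(L^p)$, uniformly in the parameters, and by the retract theorem for the complex interpolation functor the problem reduces to the sequence-space identity
\[
[\ell_{q_1}^{s_1,\alpha}(L^{p_1}),\ell_{q_2}^{s_2,\alpha}(L^{p_2})]_\theta=\ell_{q_\theta}^{s_\theta,\alpha}(L^{p_\theta}).
\]

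Finally I would establish this last identity directly: the weight interpolates as $\bigl(\langle k\rangle^{s_1/(1-\alpha)}\bigr)^{1-\theta}\bigl(\langle k\rangle^{s_2/(1-\alpha)}\bigr)^{\theta}=\langle k\rangle^{s_\theta/(1-\alpha)}$, while the outer $\ell^q$ and inner $L^p$ indices follow from the standard complex interpolation of vector-valued mixed-norm Lebesgue spaces. The case $\alpha=1$ is handled in exactly the same way, replacing the $\alpha$-covering and its operators $\Box_k^\alpha$ by the dyadic Littlewood-Paley decomposition $\Delta_j$ and the weight $\langle k\rangle^{s/(1-\alpha)}$ by $2^{js}$, which recovers the classical interpolation of Besov spaces. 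I expect the genuine difficulty to lie in this last step within the quasi-Banach range $0<p,q<1$: there the Calder\'on complex method is not automatic, and one must invoke its extension to quasi-Banach lattices (exploiting that these mixed-norm spaces are analytically convex), which is where the real substance concentrates; the retract reduction itself is then a formal, if technical, transfer.
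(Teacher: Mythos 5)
First, a point of order: the paper does not prove this lemma at all --- it is imported from Han and Wang \cite{Wang_Han} as a black box --- so there is no internal proof to compare yours against; the relevant benchmark is the argument in that reference. Your retract strategy is the classical route and is sound in the Banach range $1\leq p_i,q_i\leq\infty$: the identity $B\circ A=\mathrm{Id}$ does follow from the fact that $\eta_k^{\alpha,\ast}\equiv 1$ on $\mathbf{supp}\,\eta_k^{\alpha}$, the boundedness of $A$ is definitional, the boundedness of $B$ follows from Young's inequality plus the finite overlap of the covering, and the weighted vector-valued identity $[\ell_{q_1}^{s_1,\alpha}(L^{p_1}),\ell_{q_2}^{s_2,\alpha}(L^{p_2})]_\theta=\ell_{q_\theta}^{s_\theta,\alpha}(L^{p_\theta})$ is standard in that range.

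The genuine gap sits in the quasi-Banach range, which is exactly where the lemma is needed in this paper (it is invoked for all $0<p,q\leq\infty$ in the proof of Lemma \ref{asymptotic estimates}). Your synthesis operator $B\{g_k\}=\sum_k\Box_k^{\alpha,\ast}g_k$ is \emph{not} bounded on all of $\ell_q^{s,\alpha}(L^p)$ when $p<1$: Lemma \ref{lemma, convolution} requires \emph{both} convolution factors to be band-limited, and convolution against $\mathscr{F}^{-1}\eta_k^{\alpha,\ast}$ is not a bounded operation on general $L^p$ functions for $p<1$ (such functions need not even be tempered distributions). The retract therefore has to land in the closed subspace of sequences $\{g_k\}$ with $\mathrm{supp}\,\widehat{g_k}$ contained in a fixed dilate of $\mathbf{supp}\,\eta_k^{\alpha}$, and one must then prove the complex interpolation identity for \emph{these} constrained sequence spaces, which is no longer the off-the-shelf mixed-norm result. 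Compounding this, you correctly observe that the Calder\'on functor itself must be replaced by a quasi-Banach substitute (analytic convexity, Calder\'on products, or an explicit analytic-family definition as in Triebel), but you leave both that and the constrained-subspace interpolation as citations to be filled in; together these two points are essentially the entire content of the lemma in the range $p<1$ or $q<1$, so as written the proposal proves only the Banach case. A minor remark: the displayed formula for $1/q_\theta$ in the statement contains a typo inherited from the paper; it should read $(1-\theta)/q_1+\theta/q_2$.
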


\begin{remark}
In the rest of this paper, for simplicity in the notation, we denote
\begin{equation}
M_{i}^{s_{i}}=M_{p_{i},q_{i}}^{s_{i},\alpha _{i}},\hspace{6mm}
M_{i}=M_{p_{i},q_{i}}^{0,\alpha _{i}}
\end{equation}
for $i=1,2$, when no confusion is possible.
\end{remark}

\section{Fourier multipliers on $\protect\alpha$-modulation spaces}

In this section, we display some propositions to explain the framework for the
proof of Theorem \ref{sharpness of embedding}. And each of these
propositions also has its independent significance.

Firstly, we recall the previous study of Fourier multiplier on frequency decomposition spaces.
In Feichtinger-Narimani \cite{Feichtinger_Narimani}, the authors study the Fourier multiplier between $M_{p_1,q_1}$ and $M_{p_2,q_2}$,
where $1\leq p_i, q_i\leq \infty$ for $i=1,2$, one can also see Feichtinger-Gr\"{o}bner \cite{Feichtinger_I} for
a general result in the frame of Banach space (with same decomposition).
Recently, in order to study the behavior of unimodular multiplier on $\alpha$-modulation spaces, in \cite{Zhao_Guo_Nolinear},
we establish a corresponding result between $M_{p_1,q_1}^{s_1,\alpha}$ and $M_{p_2,q_2}^{s_2,\alpha}$, where
$1\leq p_i. q_i\leq \infty$, $s_i\in \mathbb{R}$.

In this section, we give a full characterization of Fourier multipliers between any two $\alpha $-modulation spaces,
which extends all the previous results.
Especially, our theorem
covers the case that $\alpha _{1}\neq \alpha _{2}$ and $s_1\neq s_2$, which allows the different decompositions and different potentials.
Our theorem also covers the Quasi-Banach case ($p<1$ or $q<1$), which is not contained in the previous results.

Our method is based on the observation that an $\alpha$-modulation space
with smaller $\alpha$ can be regarded as a corresponding $\alpha$-modulation
space with larger $\alpha$. The following proposition demonstrates this
viewpoint.

\begin{proposition}
\label{proposition, viewpoint} Let $0< p, q \leq \infty,$ $s\in \mathbb{R}$,
$\alpha_i\in [0,1]$ and $\alpha_1\leq\alpha_2$. We have
\begin{equation}
\|f\|_{M_{p,q}^{s,\alpha_1}} \sim
\begin{cases}
\big\|\{\|\Box_k^{\alpha_2}f\|_{M_{p,q}^{0,\alpha_1}}\}|~{l_{q}^{s,\alpha_2}}%
\big\|,~ & \text{if}~\alpha_2<1 \\
\big\|\{\|\Delta_jf\|_{M_{p,q}^{0,\alpha_1}}\}|~{l_{q}^{s,1}}\big\|,~ &
\text{if}~\alpha_2=1.%
\end{cases}%
\end{equation}
\end{proposition}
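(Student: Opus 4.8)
The plan is to establish the norm equivalence by decomposing the $\alpha_1$-modulation norm of $f$ using the coarser $\alpha_2$-decomposition as an organizing tool, exploiting the fact that each $\alpha_2$-block contains a controlled number of $\alpha_1$-blocks. I will focus on the case $\alpha_2<1$; the Besov case $\alpha_2=1$ is handled in parallel by replacing $\Box_k^{\alpha_2}$ with $\Delta_j$ and $l_q^{s,\alpha_2}$ with $l_q^{s,1}$. The key geometric input is that the $\alpha_2$-covering is coarser than the $\alpha_1$-covering when $\alpha_1\le\alpha_2$, so for each $k$ the set $\Gamma_k^{\alpha_1,\alpha_2}=\{l:\Box_k^{\alpha_2}\circ\Box_l^{\alpha_1}\neq 0\}$ of $\alpha_1$-indices interacting with the $k$-th $\alpha_2$-block has cardinality comparable to $\langle k\rangle^{n(\alpha_2-\alpha_1)/(1-\alpha_2)}$ up to constants, and the weights $\langle l\rangle^{s/(1-\alpha_1)}$ for $l\in\Gamma_k^{\alpha_1,\alpha_2}$ are all comparable to $\langle k\rangle^{s/(1-\alpha_2)}$.

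First I would fix an almost-orthogonal relation: by the support conditions in the definition of $\{\eta_k^\alpha\}$, one has $\Box_l^{\alpha_1}f=\sum_{k}\Box_l^{\alpha_1}\Box_k^{\alpha_2}f$ where the sum runs over the finitely many $k$ with $l\in\Gamma_k^{\alpha_1,\alpha_2}$, and conversely $\Box_k^{\alpha_2}f=\sum_{l\in\Gamma_k^{\alpha_1,\alpha_2}}\Box_k^{\alpha_2}\Box_l^{\alpha_1}f$. The right-hand side of the claimed equivalence expands as
\begin{equation*}
\Big\|\{\|\Box_k^{\alpha_2}f\|_{M_{p,q}^{0,\alpha_1}}\}\,|\,l_q^{s,\alpha_2}\Big\|
=\Big(\sum_k\langle k\rangle^{\frac{sq}{1-\alpha_2}}\big\|\Box_k^{\alpha_2}f\big\|_{M_{p,q}^{0,\alpha_1}}^q\Big)^{1/q},
\end{equation*}
and $\|\Box_k^{\alpha_2}f\|_{M_{p,q}^{0,\alpha_1}}^q=\sum_{l\in\Gamma_k^{\alpha_1,\alpha_2}}\|\Box_l^{\alpha_1}\Box_k^{\alpha_2}f\|_p^q$. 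Substituting the weight comparison $\langle k\rangle^{s/(1-\alpha_2)}\sim\langle l\rangle^{s/(1-\alpha_1)}$ for $l\in\Gamma_k^{\alpha_1,\alpha_2}$ turns this double sum into $\sum_l\langle l\rangle^{sq/(1-\alpha_1)}\|\Box_l^{\alpha_1}f\|_p^q$ after interchanging the order of summation, which is exactly $\|f\|_{M_{p,q}^{s,\alpha_1}}^q$.

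The two directions then reduce to boundedness of the local operators $\Box_l^{\alpha_1}\Box_k^{\alpha_2}$ on $L^p$. For the direction $\|f\|_{M^{s,\alpha_1}}\lesssim(\text{RHS})$ I would use that $\Box_l^{\alpha_1}$ acts as a convolution with an $L^1$-normalized kernel (uniformly in $l$ by the derivative bounds on $\eta_l^{\alpha_1}$), giving $\|\Box_l^{\alpha_1}\Box_k^{\alpha_2}f\|_p\lesssim\|\Box_k^{\alpha_2}f\|_p$ when $p\ge 1$; the reverse direction uses the analogous bound for $\Box_k^{\alpha_2}$ acting on the Fourier-localized piece, together with finite overlap of the families $\{\Gamma_k^{\alpha_1,\alpha_2}\}$. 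When $p<1$ I would instead invoke Lemma \ref{lemma, convolution}, paying the factor $R^{n(1/p-1)}$ with $R\sim\langle k\rangle^{\alpha_2/(1-\alpha_2)}$; the point is that these factors are absorbed into the uniform constants since the relevant kernels are normalized at the correct scale, and the finite-overlap counting keeps the number of interacting terms bounded. The main obstacle I anticipate is precisely this bookkeeping in the quasi-Banach range: one must verify that the geometric estimates on $|\Gamma_k^{\alpha_1,\alpha_2}|$ and the weight comparison $\langle k\rangle^{s/(1-\alpha_2)}\sim\langle l\rangle^{s/(1-\alpha_1)}$ hold with constants uniform in $k$, and that summing $q$-th powers over the at-most-comparable overlaps does not lose the equivalence — this requires care because for $q<1$ the triangle inequality is replaced by the $q$-subadditivity $\|\sum a_j\|^q\le\sum\|a_j\|^q$, which fortunately runs in the favorable direction for both inequalities once the supports are correctly matched.
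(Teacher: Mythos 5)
Your skeleton is the paper's: expand $\|\Box_k^{\alpha_2}f\|_{M^{0,\alpha_1}_{p,q}}^q=\sum_{l\in\Gamma_k^{\alpha_1,\alpha_2}}\|\Box_l^{\alpha_1}\Box_k^{\alpha_2}f\|_{L^p}^q$, use the weight comparability $\langle k\rangle^{s/(1-\alpha_2)}\sim\langle l\rangle^{s/(1-\alpha_1)}$ and the bounded overlap $|\Gamma_l^{\alpha_2,\alpha_1}|\lesssim 1$ (valid because $\alpha_1\le\alpha_2$), then interchange the order of summation; your handling of $q<1$ by $q$-subadditivity over boundedly many terms is also exactly what the paper does.

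There is, however, one local estimate assigned to the wrong direction, and as written that direction would fail. For $\|f\|_{M^{s,\alpha_1}_{p,q}}\lesssim\mathrm{RHS}$ you propose inserting $\|\Box_l^{\alpha_1}\Box_k^{\alpha_2}f\|_{L^p}\lesssim\|\Box_k^{\alpha_2}f\|_{L^p}$ after the decomposition $\Box_l^{\alpha_1}f=\sum_{k\in\Gamma_l^{\alpha_2,\alpha_1}}\Box_l^{\alpha_1}\Box_k^{\alpha_2}f$. This discards the $l$-localization: after interchanging sums you are left with $\sum_k\langle k\rangle^{sq/(1-\alpha_2)}\,|\Gamma_k^{\alpha_1,\alpha_2}|\,\|\Box_k^{\alpha_2}f\|_{L^p}^q$, and neither the unbounded factor $|\Gamma_k^{\alpha_1,\alpha_2}|\sim\langle k\rangle^{n(\alpha_2-\alpha_1)/(1-\alpha_2)}$ nor the discrepancy between $\|\Box_k^{\alpha_2}f\|_{L^p}$ and $\|\Box_k^{\alpha_2}f\|_{M^{0,\alpha_1}_{p,q}}$ can be absorbed (take $p=2$, $q=\infty$: by almost orthogonality $\|\Box_k^{\alpha_2}f\|_{L^2}$ can exceed $\sup_{l\in\Gamma_k^{\alpha_1,\alpha_2}}\|\Box_l^{\alpha_1}\Box_k^{\alpha_2}f\|_{L^2}$ by a factor $|\Gamma_k^{\alpha_1,\alpha_2}|^{1/2}$). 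Moreover, for $p<1$ that bound is not even uniform in $k$: in Lemma \ref{lemma, convolution} the common Fourier support has radius $R\sim\langle k\rangle^{\alpha_2/(1-\alpha_2)}$ while $\|\mathscr{F}^{-1}\eta_l^{\alpha_1}\|_{L^p}$ is normalized at the smaller scale $\langle l\rangle^{\alpha_1/(1-\alpha_1)}$, leaving an uncancelled power $\langle k\rangle^{n(\alpha_2-\alpha_1)(1/p-1)/(1-\alpha_2)}$, so your remark that ``the kernels are normalized at the correct scale'' only applies when the convolving kernel belongs to the coarser covering. The fix is that this direction needs no multiplier bound at all: keep the pieces $\|\Box_l^{\alpha_1}\Box_k^{\alpha_2}f\|_{L^p}^q$ intact, apply $q$-subadditivity over the $O(1)$ indices $k\in\Gamma_l^{\alpha_2,\alpha_1}$, and interchange --- precisely the computation already outlined in your second paragraph. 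The convolution bound that is actually needed, $\|\Box_k^{\alpha_2}\Box_l^{\alpha_1}f\|_{L^p}\lesssim\|\Box_l^{\alpha_1}f\|_{L^p}$ (Young for $p\ge1$, Lemma \ref{lemma, convolution} for $p<1$, where the scales do match), drives the opposite direction $\mathrm{RHS}\lesssim\|f\|_{M^{s,\alpha_1}_{p,q}}$ together with $|\Gamma_l^{\alpha_2,\alpha_1}|\lesssim1$, as you correctly indicate there.
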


\begin{proof}
We only state the proof for $\alpha_{2}<1,\ q<\infty$,
since the proof for the other cases shares the same idea.
Firstly, by the definition we have
\begin{equation}
\|\Box_k^{\alpha_2}f\|_{M_{p,q}^{0,\alpha_1}}=\left(\sum_{l\in
\Gamma_k^{\alpha_1,\alpha_2}}\|\Box_l^{\alpha_1}\Box_k^{\alpha_2}f\|^q_{L^p}%
\right)^{1/q}.
\end{equation}
Using Lemma \ref{lemma, convolution} or the Young's inequality, we deduce
\begin{equation}
\|\Box_l^{\alpha_1}\Box_k^{\alpha_2}f\|_{L^p}
\lesssim
\langle k\rangle^{\frac{\alpha_2 n}{1-\alpha_2}(\frac{1}{p\wedge 1}-1)}
\|\mathscr{F}^{-1}\eta_k^{\alpha_2}\|_{L^{p\wedge 1}}\|\Box_l^{\alpha_1}f\|_{L^p}
\lesssim
\|\Box_l^{\alpha_1}f\|_{L^p},
\end{equation}
it follows that
\begin{equation}
\|\Box_k^{\alpha_2}f\|_{M_{p,q}^{0,\alpha_1}}
\lesssim \left(\sum_{l\in
\Gamma_k^{\alpha_1,\alpha_2}}\|\Box_l^{\alpha_1}f\|^q_{L^p}\right)^{1/q}.
\end{equation}

Observing $|\Gamma_l^{\alpha_2,\alpha_1}|\lesssim 1$ for $\alpha_1\leq \alpha_2$, we deduce
\begin{equation}
\begin{split}
\big\|\{\|\Box_k^{\alpha_2}f\|_{M_{p,q}^{0,\alpha_1}}\}|~{l_{q}^{s,\alpha_2}}%
\big\| \lesssim & \left(\sum_{k\in \mathbb{Z}^n}\langle k\rangle^{\frac{sq}{%
1-\alpha_2}}\sum_{l\in
\Gamma_k^{\alpha_1,\alpha_2}}\|\Box_l^{\alpha_1}f\|^q_{L^p}\right)^{1/q} \\
\sim & \left(\sum_{l\in \mathbb{Z}^n}\sum_{k\in
\Gamma_l^{\alpha_2,\alpha_1}}\langle l\rangle^{\frac{sq}{1-\alpha_1}%
}\|\Box_l^{\alpha_1}f\|^q_{L^p}\right)^{1/q} \\
\lesssim & \left(\sum_{l\in \mathbb{Z}^n}\langle l\rangle^{\frac{sq}{%
1-\alpha_1}}\|\Box_l^{\alpha_1}f\|^q_{L^p}\right)^{1/q}\lesssim
\|f\|_{M_{p,q}^{s,\alpha_1}}.
\end{split}%
\end{equation}
On the other hand, we have
\begin{equation}
\|\Box_l^{\alpha_1}f\|_{L^p}=\|\sum_{k\in
\Gamma_l^{\alpha_2,\alpha_1}}\Box_k^{\alpha_2}\Box_l^{\alpha_1}f\|_{L^p}.
\end{equation}
Observing $|\Gamma_l^{\alpha_2,\alpha_1}|\lesssim 1,$ we deduce
\begin{equation}
\|\sum_{k\in
\Gamma_l^{\alpha_2,\alpha_1}}\Box_k^{\alpha_2}\Box_l^{\alpha_1}f\|^q_{L^p}
\lesssim \sum_{k\in
\Gamma_l^{\alpha_2,\alpha_1}}\|\Box_k^{\alpha_2}\Box_l^{\alpha_1}f\|^q_{L^p}.
\end{equation}
This leads to
\begin{equation}  \label{for proof 1}
\begin{split}
\|f\|_{M_{p,q}^{s,\alpha_1}} =& \left(\sum_{l\in \mathbb{Z}^n}\langle
l\rangle^{\frac{sq}{1-\alpha_1}}\|\Box_l^{\alpha_1}f\|^q_{L^p}\right)^{1/q}
\\
\lesssim& \left(\sum_{l\in \mathbb{Z}^n}\langle l\rangle^{\frac{sq}{%
1-\alpha_1}}\sum_{k\in
\Gamma_l^{\alpha_2,\alpha_1}}\|\Box_k^{\alpha_2}\Box_l^{\alpha_1}f\|^q_{L^p}%
\right)^{1/q} \\
\sim& \left(\sum_{l\in \mathbb{Z}^n}\sum_{k\in
\Gamma_l^{\alpha_2,\alpha_1}}\langle k\rangle^{\frac{sq}{1-\alpha_2}%
}\|\Box_k^{\alpha_2}\Box_l^{\alpha_1}f\|^q_{L^p}\right)^{1/q}.
\end{split}%
\end{equation}
By exchanging the summation order, we deduce that
\begin{equation}  \label{for proof 4}
\begin{split}
\left(\sum_{l\in \mathbb{Z}^n}\sum_{k\in
\Gamma_l^{\alpha_2,\alpha_1}}\langle k\rangle^{\frac{sq}{1-\alpha_2}%
}\|\Box_k^{\alpha_2}\Box_l^{\alpha_1}f\|^q_{L^p}\right)^{1/q} =&
\left(\sum_{k\in \mathbb{Z}^n}\sum_{l\in
\Gamma_k^{\alpha_1,\alpha_2}}\langle k\rangle^{\frac{sq}{1-\alpha_2}%
}\|\Box_k^{\alpha_2}\Box_l^{\alpha_1}f\|^q_{L^p}\right)^{1/q} \\
=& \left(\sum_{k\in \mathbb{Z}^n}\langle k\rangle^{\frac{sq}{1-\alpha_2}%
}\sum_{l\in
\Gamma_k^{\alpha_1,\alpha_2}}\|\Box_l^{\alpha_1}\Box_k^{\alpha_2}f\|^q_{L^p}%
\right)^{1/q} \\
=& \big\|\{\|\Box_k^{\alpha_2}f\|_{M_{p,q}^{0,\alpha_1}}\}|~{%
l_{q}^{s,\alpha_2}}\big\|.
\end{split}
\end{equation}
Combining with (\ref{for proof 1}) and (\ref{for proof 4}), we obtain our
conclusion.
\end{proof}

By the spirit of the above proposition, we are able to give a full
characterization of Fourier multipliers between any two $\alpha $-modulation
spaces in the following Theorem 3.2. Our theorem extends the known result in
\cite{Feichtinger_Narimani}, where the authors only consider the special
case $\alpha _{1}=\alpha _{2}=0$.
We would like to remark that our elementary method allows us to handle more general cases in Theorem \ref{characterization of fourier multiplier},
even including the Quasi-Banach case.

A tempered distribution $m$ is called a Fourier multiplier from $%
M_{p_{1},q_{1}}^{s_{1},\alpha _{1}}$ to $M_{p_{2},q_{2}}^{s_{2},\alpha _{2}}$%
, if there exists a constant $C>0$ such that
\begin{equation*}
\Vert T_{m}(f)\Vert _{M_{p_{2},q_{2}}^{s_{2},\alpha _{2}}}\leq C\Vert f\Vert
_{M_{p_{1},q_{1}}^{s_{1},\alpha _{1}}}
\end{equation*}%
for all $f\ $\ in the Schwartz space $\mathscr {S}(\mathbb{R}^{n})$, where
\begin{equation*}
T_{m}f=m(D)f=\mathscr {F}^{-1}(m\mathscr {F}f)
\end{equation*}%
is the Fourier multiplier operator associated with $m$, and $m$ is called
the symbol or multiplier of $T_{m}$.
Let $\mathcal{M}_{\mathscr{F}}\left(X,Y\right) $ denote the set of all symbols such that the corresponding Fourier multipliers are bounded from $X$ to $Y$.
We set
\begin{equation}
\Vert m|~\mathcal{M}_{\mathscr{F}}\left( X,Y\right) \Vert =\Vert T_{m}\Vert
_{X\rightarrow Y}=\sup \{\Vert m(D)f\Vert _{Y}:f\in \mathscr {S}(\mathbb{R}%
^{n}),\Vert f\Vert _{X}=1\},
\end{equation}%
where $X$ and $Y$ denote certain $\alpha $-modulation spaces.

For the sake of convenience, we define some exact Wiener amalgam spaces. For
$m\in \mathscr {S}^{\prime }$, $0<p_i, q_i\leq \infty$, $s_i\in \mathbb{R}$, $\alpha_i\in [0,1]$ for $i=1,2$,
we denote

\begin{equation}
\|m|~W^{\alpha}\left(\mathcal{M}_{\mathscr{F}}(M_{p_1,q_1}^{0,\alpha_1},
M_{p_2,q_2}^{0,\alpha_2}), \mathcal{M}_{p}(l_{q_1}^{s_1},
l_{q_2}^{s_2})\right)\|=
\|\{\|\Box_k^{\alpha}T_m\|_{M_{p_1,q_1}^{0,\alpha_1}\rightarrow
M_{p_2,q_2}^{0,\alpha_2}}\}\|_{l_{q_1}^{s_1,\alpha}\rightarrow
l_{q_2}^{s_2,\alpha}}
\end{equation}
for $\alpha\in [0,1)$. Similarly, we denote

\begin{equation}
\|m|~W^{1}\left(\mathcal{M}_{\mathscr{F}}(M_{p_1,q_1}^{0,\alpha_1},
M_{p_2,q_2}^{0,\alpha_2}), \mathcal{M}_{p}(l_{q_1}^{s_1},
l_{q_2}^{s_2})\right)\|=
\|\{\|\Delta_jT_m\|_{M_{p_1,q_1}^{0,\alpha_1}\rightarrow
M_{p_2,q_2}^{0,\alpha_2}}\}\|_{l_{q_1}^{s_1,1}\rightarrow l_{q_2}^{s_2,1}}.
\end{equation}

\begin{theorem}[\textbf{Characterization of Fourier multipliers on $\protect%
\alpha$-modulation spaces}]
\label{characterization of fourier multiplier} Let $0< p_i, q_i \leq \infty,$
$s_i\in \mathbb{R}$, $\alpha_i\in [0,1]$ for $i=1,2$. Then we have
\begin{equation}
\mathcal{M}_{\mathscr{F}}\left(M_{p_1,q_1}^{s_1,\alpha_1},
M_{p_2,q_2}^{s_2,\alpha_2}\right) = W^{\alpha_1\vee \alpha_2}\left(\mathcal{M%
}_{\mathscr{F}}\left(M_{p_1,q_1}^{0,\alpha_1},
M_{p_2,q_2}^{0,\alpha_2}\right), \mathcal{M}_{p}(l_{q_1}^{s_1},
l_{q_2}^{s_2})\right).
\end{equation}
\end{theorem}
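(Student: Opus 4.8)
The plan is to set $\alpha=\alpha_1\vee\alpha_2$ and to use Proposition \ref{proposition, viewpoint} to rewrite \emph{both} the domain norm and the target norm through the common $\alpha$-localizations. Since $\alpha_1\le\alpha$ and $\alpha_2\le\alpha$ regardless of which is larger, the proposition (its $\alpha<1$ form, or the dyadic $\Delta_j$ form when $\alpha=1$) applies to each factor and yields $\|f\|_{M_1^{s_1}}\sim\|\{\|\Box_k^{\alpha}f\|_{M_1}\}\|_{l_{q_1}^{s_1,\alpha}}$ and $\|g\|_{M_2^{s_2}}\sim\|\{\|\Box_k^{\alpha}g\|_{M_2}\}\|_{l_{q_2}^{s_2,\alpha}}$. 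The second structural fact I will use repeatedly is that $\Box_k^{\alpha}T_m$ is the Fourier multiplier with symbol $\eta_k^{\alpha}m$, and since $\eta_k^{\alpha,*}\equiv1$ on $\mathrm{supp}\,\eta_k^{\alpha}$ (because $\sum_l\eta_l^{\alpha}\equiv1$ and only the $l\in\Lambda_k^{\alpha}$ are nonzero there), one has the localization identity $\Box_k^{\alpha}T_m=\Box_k^{\alpha}T_m\Box_k^{\alpha,*}$. Thus each diagonal block of $T_m$ factors through the local piece $\Box_k^{\alpha,*}f$.

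For the inclusion $W^{\alpha}(\cdots)\subseteq\mathcal{M}_{\mathscr F}(\cdots)$ (sufficiency), write $\Im_k=\|\Box_k^{\alpha}T_m\|_{M_1\to M_2}$ and assume $\{\Im_k\}\in\mathcal{M}_p(l_{q_1}^{s_1,\alpha},l_{q_2}^{s_2,\alpha})$. For $f\in\mathscr S$, the target rewriting, the localization identity and the definition of the operator norm give $\|\Box_k^{\alpha}T_mf\|_{M_2}=\|\Box_k^{\alpha}T_m\Box_k^{\alpha,*}f\|_{M_2}\le\Im_k\|\Box_k^{\alpha,*}f\|_{M_1}$. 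Applying the pointwise-multiplier bound and then the finite-overlap estimate $|\Lambda_k^{\alpha}|\lesssim1$ to pass from $\Box_k^{\alpha,*}f$ back to $\Box_k^{\alpha}f$, I obtain $\|T_mf\|_{M_2^{s_2}}\lesssim\|\{\Im_k\}\|_{l_{q_1}^{s_1,\alpha}\to l_{q_2}^{s_2,\alpha}}\,\|f\|_{M_1^{s_1}}$, which is exactly the boundedness of $T_m$ together with the norm bound $\|T_m\|\lesssim\|m\mid W^{\alpha}(\cdots)\|$.

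For the reverse inclusion (necessity) I must produce, for each $\{\lambda_k\}\in l_{q_1}^{s_1,\alpha}$, a test function witnessing $\|\{\Im_k\lambda_k\}\|_{l_{q_2}^{s_2,\alpha}}\lesssim\|T_m\|\,\|\{\lambda_k\}\|_{l_{q_1}^{s_1,\alpha}}$. For each $k$ I would pick $h_k$ with $\|h_k\|_{M_1}\le1$ and $\|\Box_k^{\alpha}T_mh_k\|_{M_2}\ge(1-\varepsilon)\Im_k$, and set $f_k=\Box_k^{\alpha,*}h_k$, so that $\mathrm{supp}\,\widehat{f_k}\subset\mathrm{supp}\,\eta_k^{\alpha,*}$, $\|f_k\|_{M_1}\lesssim1$, and, by the localization identity, $\|\Box_k^{\alpha}T_mf_k\|_{M_2}\ge(1-\varepsilon)\Im_k$. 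The naive choice $f=\sum_k\lambda_kf_k$ runs into the off-diagonal interference $\Box_j^{\alpha}T_mf=\sum_k\lambda_k\Box_j^{\alpha}T_mf_k$, and this is the main obstacle: neighbouring terms $k\ne j$ may cancel the diagonal one. I would remove it by a colouring argument: the relation ``$\mathrm{supp}\,\eta_j^{\alpha}\cap\mathrm{supp}\,\eta_k^{\alpha,*}\ne\emptyset$'' is bounded-range and of bounded degree on $\mathbb Z^n$, so $\mathbb Z^n=\bigsqcup_{r=1}^{N_0}E_r$ with $N_0=N_0(n,\alpha)$ and, within each class, $j\ne k$ forces both $\Box_j^{\alpha}T_mf_k=0$ and $\Box_j^{\alpha}f_k=0$.

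Then for $f^{(r)}=\sum_{k\in E_r}\lambda_kf_k$ only diagonal blocks survive, $\Box_j^{\alpha}f^{(r)}=\lambda_j\Box_j^{\alpha}f_j$ and $\Box_j^{\alpha}T_mf^{(r)}=\lambda_j\Box_j^{\alpha}T_mf_j$ for $j\in E_r$. The domain rewriting gives $\|f^{(r)}\|_{M_1^{s_1}}\lesssim\|\{\lambda_j\}_{j\in E_r}\|_{l_{q_1}^{s_1,\alpha}}$, while the target rewriting restricted to $j\in E_r$ gives $\|T_mf^{(r)}\|_{M_2^{s_2}}\gtrsim(1-\varepsilon)\|\{\lambda_j\Im_j\}_{j\in E_r}\|_{l_{q_2}^{s_2,\alpha}}$. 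Boundedness of $T_m$ then yields the multiplier bound on each class, and summing the finitely many classes (with the quasi-triangle inequality, so that the $p,q<1$ case only costs a constant depending on $N_0$) gives $\|\{\Im_k\lambda_k\}\|_{l_{q_2}^{s_2,\alpha}}\lesssim\|T_m\|\,\|\{\lambda_k\}\|_{l_{q_1}^{s_1,\alpha}}$; letting $\varepsilon\to0$ finishes the necessity and hence the claimed equality with equivalent norms. The case $\alpha=1$ is handled identically after replacing $\Box_k^{\alpha}$, $\Lambda_k^{\alpha}$ and $l_q^{s,\alpha}$ by $\Delta_j$, its dyadic neighbours and $l_q^{s,1}$, since the dyadic decomposition enjoys the same finite-overlap and bounded-interaction properties. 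The technical heart is the colouring step that kills the off-diagonal cancellation; everything else is a transcription through Proposition \ref{proposition, viewpoint} and the boundedness and finite overlap of the $\alpha$-decomposition operators, which already cover the quasi-Banach regime.
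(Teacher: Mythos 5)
Your proposal is correct and follows essentially the same route as the paper: rewrite both norms through the $\alpha_1\vee\alpha_2$-localizations via Proposition \ref{proposition, viewpoint}, use the identity $\Box_k^{\alpha}T_m=\Box_k^{\alpha}T_m\Box_k^{\alpha,\ast}$ plus the pointwise-multiplier hypothesis for sufficiency, and test with near-extremizers on a disjointified family of indices for necessity. The only cosmetic differences are that you sum over all finitely many colour classes while the paper selects a single subset $E_m$ realizing the multiplier norm up to a constant, and that you treat the cases $\alpha_1\leq\alpha_2$ and $\alpha_1>\alpha_2$ uniformly rather than separately.
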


\begin{proof}
We only give the proof for $\alpha_1, \alpha_2<1$, the other cases can be
handled similarly. We divide this proof into two cases.
\\
\textbf{Case 1. $\alpha_1\leq \alpha_2$.}

Firstly, we assume $m\in W^{\alpha_2}\left(\mathcal{M}_{\mathscr{F}}(M_1,
M_2), \mathcal{M}_{p}(l_{q_1}^{s_1}, l_{q_2}^{s_2})\right)$. By the
definition, we have that $\Box_k^{\alpha_2}T_m\in \mathcal{L}(M_1, M_2)$ for
$k\in \mathbb{Z}^n$, and $\{\|\Box_k^{\alpha_2}T_m\|_{M_1\rightarrow
M_2}\}\in \mathcal{M}_{p}(l_{q_1}^{s_1,\alpha_2}, l_{q_2}^{s_2,\alpha_2})$.
For any $f\in \mathscr {S}$, we deduce
\begin{equation}
\begin{split}
\|\Box_k^{\alpha_2}T_mf\|_{L^{p_2}} \sim&\|\Box_k^{\alpha_2}T_m f\|_{M_2} \\
=&\|\Box_k^{\alpha_2}T_m \Box_k^{\alpha_2,\ast}f\|_{M_2} \\
\lesssim& \|\Box_k^{\alpha_2}T_m\|_{M_1\rightarrow
M_2}\|\Box_k^{\alpha_2,\ast}f\|_{M_1}.
\end{split}%
\end{equation}
It yields
\begin{equation*}
\begin{split}
\|T_mf\|_{M_2^{s_2}}=&\|\{\|\Box_k^{\alpha_2}T_mf%
\|_{L^{p_2}}\}\|_{l_{q_2}^{s_2,\alpha_2}} \\
\lesssim& \|\{\|\Box_k^{\alpha_2}T_m\|_{M_1\rightarrow
M_2}\|\Box_k^{\alpha_2,\ast}f\|_{M_1}\}\|_{l_{q_2}^{s_2,\alpha_2}} \\
\lesssim& \|\{\|\Box_k^{\alpha_2}T_m\|_{M_1\rightarrow
M_2}\}\|_{l_{q_1}^{s_1,\alpha_2}\rightarrow l_{q_2}^{s_2,\alpha_2}}
\|\{\|\Box_k^{\alpha_2,\ast}f\|_{M_1}\}\|_{l_{q_1}^{s_1,\alpha_2}} \\
=& \big\|m|~W^{\alpha_2}\left(\mathcal{M}_{\mathscr{F}}(M_1, M_2),
\mathcal{M}_{p}(l_{q_1}^{s_1}, l_{q_2}^{s_2})\right)\big\| %
\|\{\|\Box_k^{\alpha_2,\ast}f\|_{M_1}\}\|_{l_{q_1}^{s_1,\alpha_2}}.
\end{split}
\end{equation*}
Observing
\begin{equation}
\begin{split}
\big\|\{\|\Box_k^{\alpha_2,\ast}f\|_{M_1}\}\big\|_{l_{q_1}^{s_1,\alpha_2}}
\lesssim &
\big\|\{\sum_{l\in \Lambda_k^{\alpha_2,\ast}}\|\Box_l^{\alpha_2}f\|_{M_1}\}_{k\in \mathbb{Z}^n}\big\|_{l_{q_1}^{s_1,\alpha_2}}
\\
\lesssim &
\big\|\{\|\Box_k^{\alpha_2}f\|_{M_1}\}_{k\in \mathbb{Z}^n}\big\|_{l_{q_1}^{s_1,\alpha_2}}
\sim \|f\|_{M_1^{s_1}},
\end{split}
\end{equation}
we obtain
\begin{equation*}
  \|T_mf\|_{M_2^{s_2}}
  \lesssim
  \big\|m|~W^{\alpha_2}\left(\mathcal{M}_{\mathscr{F}}(M_1, M_2),
  \mathcal{M}_{p}(l_{q_1}^{s_1}, l_{q_2}^{s_2})\right)\big\|
  \|f\|_{M_1^{s_1}},
\end{equation*}
which further implies
\begin{equation}
\big\|m|~\mathcal{M}_{\mathscr{F}}\left(M_1^{s_1}, M_2^{s_2}\right)\big\| %
\lesssim \big\|m|~W^{\alpha_2}\left(\mathcal{M}_{\mathscr{F}}(M_1, M_2),
\mathcal{M}_{p}(l_{q_1}^{s_1}, l_{q_2}^{s_2})\right)\big\|.
\end{equation}

Next, we assume $m\in \mathcal{M}_{\mathscr{F}}\left(M_1^{s_1},M_2^{s_2}\right)$.
For the local operator $\Box_k^{\alpha_2}T_m$, we have
\begin{equation*}
  \begin{split}
    \|\Box_k^{\alpha_2}T_mf\|_{M_2}
    \sim &\langle k\rangle^{\frac{-s_2}{1-\alpha_2}}\|T_m(\Box_k^{\alpha_2}f)\|_{M_2^{s_2}}
    \\
    \lesssim &
    \big\|m|~\mathcal{M}_{\mathscr{F}}\left(M_1^{s_1}, M_2^{s_2}\right)\big\| \langle k\rangle^{\frac{-s_2}{1-\alpha_2}}\|\Box_k^{\alpha_2}f\|_{M_1^{s_1}}
    \\
    \lesssim &
    \big\|m|~\mathcal{M}_{\mathscr{F}}\left(M_1^{s_1}, M_2^{s_2}\right)\big\| \langle k\rangle^{\frac{s_1-s_2}{1-\alpha_2}}\|f\|_{M_1},
  \end{split}
\end{equation*}
which implies $\Box_k^{\alpha_2}T_m \in \mathcal {L}(M_1, M_2)$.

Moreover, by the almost disjointization of the decomposition, we choose
a subset of $\mathbb{Z}^n$ denoted by $E_m$, which may depend on the exact $%
m $, such that
\begin{equation}
\Lambda_k^{\alpha_2,\ast}\cap \Lambda_l^{\alpha_2,\ast}=\emptyset
\end{equation}
for any $k,l\in E_m$, $k\neq l$, and
\begin{equation}
\begin{split}
&\big\|m|~W^{\alpha_2}\left(\mathcal{M}_{\mathscr{F}}(M_1, M_2), \mathcal{M}%
_{p}(l_{q_1}^{s_1}, l_{q_2}^{s_2})\right)\big\| \\
\leq& C \big\|\{\|\Box_k^{\alpha_2}T_m\|_{M_1\rightarrow M_2}\}\big\|%
_{l_{q_1}^{s_1,\alpha_2}(E_m)\rightarrow l_{q_2}^{s_2,\alpha_2}(E_m)},
\end{split}%
\end{equation}
where the constant $C$ is independent of the exact $m$.
We assume without loss of generality that $\|\Box_k^{\alpha_2}T_m\|_{M_1\rightarrow M_2}\neq 0$ for all $k\in E_m$.
Then, one
can find $f_k\in \mathscr {S},~f_k\neq 0$ such that
\begin{equation}
\|\Box_k^{\alpha_2}T_mf_k\|_{M_2} \gtrsim
\|\Box_k^{\alpha_2}T_m\|_{M_1\rightarrow M_2} \|f_k\|_{M_1}
\end{equation}
for every $k\in E_m$. It leads to
\begin{equation}
\|\Box_k^{\alpha_2}T_m\Box_k^{\alpha_2,\ast}f_k\|_{M_2} \gtrsim
\|\Box_k^{\alpha_2}T_m\|_{M_1\rightarrow M_2}
\|\Box_k^{\alpha_2,\ast}f_k\|_{M_1}.
\end{equation}
In addition, we deduce $\|\Box_k^{\alpha_2,\ast}f_k\|_{M_1}\neq 0$ by the fact $\|\Box_k^{\alpha_2}T_m\Box_k^{\alpha_2,\ast}f_k\|_{M_2}\neq 0$.

For any nonnegative sequence $\{a_k\}_{k\in \Gamma_m}$, we have that
\begin{equation}
\begin{split}
&\|\{a_k\|\Box_k^{\alpha_2}T_m\|_{M_1\rightarrow M_2}
\|\Box_k^{\alpha_2,\ast}f_k\|_{M_1}\}|~l_{q_2}^{s_2,\alpha_2}(E_m)\| \\
\lesssim&
\|\{a_k\|\Box_k^{\alpha_2}T_m\Box_k^{\alpha_2,\ast}f_k\|_{M_2}%
\}|~l_{q_2}^{s_2,\alpha_2}(E_m)\| \\
\lesssim& \big\|\sum_{k\in E_m}a_k\Box_k^{\alpha_2,\ast}T_mf_k\big\|%
_{M_2^{s_2}} = \big\|T_m\big(\sum_{k\in E_m}a_k\Box_k^{\alpha_2,\ast}f_k\big)%
\big\|_{M_2^{s_2}} \\
\lesssim& \big\|m|~\mathcal{M}_{\mathscr{F}}\left(M_1^{s_1}, M_2^{s_2}\right)%
\big\|\big\|\sum_{k\in E_m}a_k\Box_k^{\alpha_2,\ast}f_k\big\|_{M_1^{s_1}}.
\end{split}
\end{equation}
By the fact that $\left|\{k\in \mathbb{Z}^n:\ \Box_l^{\alpha_1}\Box_k^{\alpha_2, \ast}\neq 0 \}\right|\leq C$ for any $l\in \mathbb{Z}^n$, we deduce
\begin{equation*}
  \begin{split}
    \big\|\sum_{k\in E_m}a_k\Box_k^{\alpha_2,\ast}f_k\big\|_{M_1^{s_1}}
    = &
    \left(\sum_{l\in \mathbb{Z}^n}\langle l\rangle^{\frac{s_1q_1}{1-\alpha_1}}\big\|\sum_{k\in E_m}a_k\Box_l^{\alpha_1}\Box_k^{\alpha_2,\ast}f_k\big\|_{L^{p_1}}^{q_1}\right)^{1/q_1}
    \\
    \lesssim &
    \left(\sum_{l\in \mathbb{Z}^n}\langle l\rangle^{\frac{s_1q_1}{1-\alpha_1}}\sum_{k\in E_m}\big\|a_k\Box_l^{\alpha_1}\Box_k^{\alpha_2,\ast}f_k\big\|_{L^{p_1}}^{q_1}\right)^{1/q_1}
    \\
    = &
    \left(\sum_{k\in E_m}\langle k\rangle^{\frac{s_1q_1}{1-\alpha_2}}a_k^{q_1}\sum_{l\in \mathbb{Z}^n}\big\|\Box_l^{\alpha_1}\Box_k^{\alpha_2,\ast}f_k\big\|_{L^{p_1}}^{q_1}\right)^{1/q_1}
    \\
    \sim &
    \|\{a_k\|\Box_k^{\alpha_2,\ast}f_k\|_{M_1}\}|~l_{q_1}^{s_1,\alpha_2}(E_m)\|.
  \end{split}
\end{equation*}
Thus, we have
\begin{equation}
\begin{split}
&\|\{a_k\|\Box_k^{\alpha_2}T_m\|_{M_1\rightarrow M_2}
\|\Box_k^{\alpha_2,\ast}f_k\|_{M_1}\}|~l_{q_2}^{s_2,\alpha_2}(E_m)\| \\
\lesssim &
\big\|m|~\mathcal{M}_{\mathscr{F}}\left(M_1^{s_1}, M_2^{s_2}\right)\big\|
\|\{a_k\|\Box_k^{\alpha_2,\ast}f_k\|_{M_1}\}|~l_{q_1}^{s_1,%
\alpha_2}(E_m)\|.
\end{split}
\end{equation}

By the arbitrariness of $\{a_k\}_{k\in E_m}$ and the fact $\|\Box_k^{\alpha_2,\ast}f_k\|_{M_1}\neq 0$, we have
\begin{equation}
\big\|\{\|\Box_k^{\alpha_2}T_m\|_{M_1\rightarrow M_2}\}\big\|%
_{l_{q_1}^{s_1,\alpha_2}(E_m)\rightarrow l_{q_2}^{s_2,\alpha_2}(E_m)}
\lesssim \big\|m|~\mathcal{M}_{\mathscr{F}}\left(M_1^{s_1}, M_2^{s_2}\right)%
\big\|.
\end{equation}
So we deduce
\begin{equation}
\big\|m|~W^{\alpha_2}\left(\mathcal{M}_{\mathscr{F}}(M_1, M_2), \mathcal{M}%
_{p}(l_{q_1}^{s_1}, l_{q_2}^{s_2})\right)\big\| \lesssim \big\|m|~\mathcal{M}%
_{\mathscr{F}}\left(M_1^{s_1}, M_2^{s_2}\right)\big\|.
\end{equation}
~\\
\textbf{Case 2. $\alpha_1> \alpha_2$.}

We assume $m\in W^{\alpha_1}\left(\mathcal{M}_{\mathscr{F}}(M_1,
M_2), \mathcal{M}_{p}(l_{q_1}^{s_1}, l_{q_2}^{s_2})\right)$.
By the definition, we obtain
$\Box_k^{\alpha_1}T_m\in \mathcal{L}(M_1, M_2)$ for all
$k\in \mathbb{Z}^n$, and $\{\|\Box_k^{\alpha_1}T_m\|_{M_1\rightarrow
M_2}\}\in \mathcal{M}_{p}(l_{q_1}^{s_1,\alpha_1}, l_{q_2}^{s_2,\alpha_1})$.
For any $f\in \mathscr {S}$, we deduce
\begin{equation}
\begin{split}
\|\Box_k^{\alpha_1}T_m f\|_{M_2}
=&\|\Box_k^{\alpha_1}T_m \Box_k^{\alpha_1,\ast}f\|_{M_2} \\
\lesssim& \|\Box_k^{\alpha_1}T_m\|_{M_1\rightarrow
M_2}\|\Box_k^{\alpha_1,\ast}f\|_{M_1}.
\end{split}%
\end{equation}
It yields
\begin{equation*}
\begin{split}
\|T_mf\|_{M_2^{s_2}}
\sim &
\|\{\|\Box_k^{\alpha_1}T_mf\|_{M_2}\}\|_{l_{q_2}^{s_2,\alpha_1}} \\
\lesssim& \|\{\|\Box_k^{\alpha_1}T_m\|_{M_1\rightarrow
M_2}\|\Box_k^{\alpha_1,\ast}f\|_{M_1}\}\|_{l_{q_2}^{s_2,\alpha_1}} \\
\lesssim& \|\{\|\Box_k^{\alpha_1}T_m\|_{M_1\rightarrow
M_2}\}\|_{l_{q_1}^{s_1,\alpha_1}\rightarrow l_{q_2}^{s_2,\alpha_1}}
\|\{\|\Box_k^{\alpha_1,\ast}f\|_{M_1}\}\|_{l_{q_1}^{s_1,\alpha_1}} \\
\lesssim &
\big\|m|~W^{\alpha_1}\left(\mathcal{M}_{\mathscr{F}}(M_1, M_2),
\mathcal{M}_{p}(l_{q_1}^{s_1}, l_{q_2}^{s_2})\right)\big\| %
\|f\|_{M_1^{s_1}},
\end{split}
\end{equation*}
which further implies
\begin{equation}
\big\|m|~\mathcal{M}_{\mathscr{F}}\left(M_1^{s_1}, M_2^{s_2}\right)\big\| %
\lesssim \big\|m|~W^{\alpha_1}\left(\mathcal{M}_{\mathscr{F}}(M_1, M_2),
\mathcal{M}_{p}(l_{q_1}^{s_1}, l_{q_2}^{s_2})\right)\big\|.
\end{equation}

On the other hand, if $m\in \mathcal{M}_{\mathscr{F}}\left(M_1^{s_1},M_2^{s_2}\right)$.
we deduce $\Box_k^{\alpha_1}T_m \in \mathcal {L}(M_1, M_2)$ as in Case 1.
By the almost disjointization of the decomposition, we choose
a subset of $\mathbb{Z}^n$ denoted by $E_m$, which may depend on the exact $%
m $, such that
\begin{equation}
\Lambda_k^{\alpha_1,\ast}\cap \Lambda_l^{\alpha_1,\ast}=\emptyset
\end{equation}
for any $k,l\in E_m$, $k\neq l$, and
\begin{equation}
\begin{split}
&\big\|m|~W^{\alpha_1}\left(\mathcal{M}_{\mathscr{F}}(M_1, M_2), \mathcal{M}%
_{p}(l_{q_1}^{s_1}, l_{q_2}^{s_2})\right)\big\| \\
\leq& C \big\|\{\|\Box_k^{\alpha_1}T_m\|_{M_1\rightarrow M_2}\}\big\|%
_{l_{q_1}^{s_1,\alpha_1}(E_m)\rightarrow l_{q_2}^{s_2,\alpha_1}(E_m)},
\end{split}%
\end{equation}
where the constant $C$ is independent of the exact $m$.
We assume without loss of generality that $\|\Box_k^{\alpha_1}T_m\|_{M_1\rightarrow M_2}\neq 0$ for all $k\in E_m$.
Then, one
can find $f_k\in \mathscr {S},~f_k\neq 0$ such that
\begin{equation}
\|\Box_k^{\alpha_1}T_mf_k\|_{M_2} \gtrsim
\|\Box_k^{\alpha_1}T_m\|_{M_1\rightarrow M_2} \|f_k\|_{M_1}
\end{equation}
for every $k\in E_m$, which leads to
\begin{equation}
\|\Box_k^{\alpha_1}T_m\Box_k^{\alpha_1,\ast}f_k\|_{M_2} \gtrsim
\|\Box_k^{\alpha_1}T_m\|_{M_1\rightarrow M_2}
\|\Box_k^{\alpha_1,\ast}f_k\|_{M_1}.
\end{equation}
In addition, we deduce $\|\Box_k^{\alpha_1,\ast}f_k\|_{M_1}\neq 0$ by the fact $\|\Box_k^{\alpha_1}T_m\Box_k^{\alpha_1,\ast}f_k\|_{M_2}\neq 0$.

For any nonnegative sequence $\{a_k\}_{k\in \Gamma_m}$, we have that
\begin{equation}
\begin{split}
&\|\{a_k\|\Box_k^{\alpha_1}T_m\|_{M_1\rightarrow M_2}
\|\Box_k^{\alpha_1,\ast}f_k\|_{M_1}\}|~l_{q_2}^{s_2,\alpha_1}(E_m)\| \\
\lesssim&
\|\{a_k\|\Box_k^{\alpha_1}T_m\Box_k^{\alpha_1,\ast}f_k\|_{M_2}\}|~l_{q_2}^{s_2,\alpha_1}(E_m)\|.
\end{split}
\end{equation}
By the spirit of Proposition \ref{proposition, viewpoint}, we deduce
\begin{equation*}
  \begin{split}
    \|\{a_k\|\Box_k^{\alpha_1}T_m\Box_k^{\alpha_1,\ast}f_k\|_{M_2}\}|~l_{q_2}^{s_2,\alpha_1}(E_m)\|
    \lesssim &
    \|\{\|\Box_k^{\alpha_1}\sum_{l\in E_m}a_l\Box_l^{\alpha_1,\ast }T_m f_l\|_{M_2}\}|~l_{q_2}^{s_2,\alpha_1}\|
    \\
    \sim &
    \big\|\sum_{k\in E_m}a_k\Box_k^{\alpha_1,\ast}T_mf_k\big\|_{M_2^{s_2}}
    \\
    = &
    \big\|T_m(\sum_{k\in E_m}a_k\Box_k^{\alpha_1,\ast}f_k)\big\|_{M_2^{s_2}}.
  \end{split}
\end{equation*}
Thus, we obtain
\begin{equation}
\begin{split}
&\|\{a_k\|\Box_k^{\alpha_1}T_m\|_{M_1\rightarrow M_2}
\|\Box_k^{\alpha_1,\ast}f_k\|_{M_1}\}|~l_{q_2}^{s_2,\alpha_1}(E_m)\| \\
\lesssim&
\big\|T_m\big(\sum_{k\in E_m}a_k\Box_k^{\alpha_1,\ast}f_k\big)%
\big\|_{M_2^{s_2}} \\
\lesssim& \big\|m|~\mathcal{M}_{\mathscr{F}}\left(M_1^{s_1}, M_2^{s_2}\right)%
\big\|\big\|\sum_{k\in E_m}a_k\Box_k^{\alpha_1,\ast}f_k\big\|_{M_1^{s_1}}
\\
\lesssim &
\big\|m|~\mathcal{M}_{\mathscr{F}}\left(M_1^{s_1}, M_2^{s_2}\right)\big
\|\{a_k \|\Box_k^{\alpha_1,\ast}f_k\|_{M_1}\}|~l_{q_1}^{s_1,\alpha_1}(E_m)\|.
\end{split}
\end{equation}
By the arbitrariness of $\{a_k\}_{k\in E_m}$ and the fact $\|\Box_k^{\alpha_1,\ast}f_k\|_{M_1}\neq 0$, we have
\begin{equation}
\big\|\{\|\Box_k^{\alpha_1}T_m\|_{M_1\rightarrow M_2}\}\big\|%
_{l_{q_1}^{s_1,\alpha_1}(E_m)\rightarrow l_{q_2}^{s_2,\alpha_1}(E_m)}
\lesssim \big\|m|~\mathcal{M}_{\mathscr{F}}\left(M_1^{s_1}, M_2^{s_2}\right)%
\big\|.
\end{equation}
So we deduce
\begin{equation}
\big\|m|~W^{\alpha_1}\left(\mathcal{M}_{\mathscr{F}}(M_1, M_2), \mathcal{M}%
_{p}(l_{q_1}^{s_1}, l_{q_2}^{s_2})\right)\big\| \lesssim \big\|m|~\mathcal{M}%
_{\mathscr{F}}\left(M_1^{s_1}, M_2^{s_2}\right)\big\|.
\end{equation}
\end{proof}

It is obvious that the embedding relations between $\alpha $-modulation
spaces can be viewed as the boundedness of the identity operator between the
same $\alpha $-modulation spaces. Using Theorem \ref{characterization of
fourier multiplier}, we obtain the reduction of embedding immediately.

\begin{corollary}[\textbf{Reduction of the embedding}]
\label{reduction of the embedding} Let $0< p_i, q_i \leq \infty,$ $s_i\in
\mathbb{R}$, $\alpha_i\in [0,1]$ for $i=1,2$. Then
\begin{equation}
M_{p_1,q_1}^{s_1,\alpha_1}\subseteq M_{p_2,q_2}^{s_2,\alpha_2}
\end{equation}
holds if and only if
\begin{equation}
\left\{\left\|\Box_k^{\alpha_1\vee
\alpha_2}|~M_{p_1,q_1}^{0,\alpha_1}\rightarrow
M_{p_2,q_2}^{0,\alpha_2}\right\|\right\}\in \mathcal{M}_{p}(l_{q_1}^{s_1,%
\alpha_1\vee \alpha_2}, l_{q_2}^{s_2,\alpha_1\vee \alpha_2})
\end{equation}
for $\alpha_1\vee \alpha_2<1$, and
\begin{equation}
\left\{\left\|\Delta_j|~M_{p_1,q_1}^{0,\alpha_1}\rightarrow
M_{p_2,q_2}^{0,\alpha_2}\right\|\right\}\in \mathcal{M}_{p}(l_{q_1}^{s_1,1},
l_{q_2}^{s_2,1})
\end{equation}
for $\alpha_1\vee \alpha_2=1$.
\end{corollary}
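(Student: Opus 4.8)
The plan is to realize the embedding as the boundedness of one distinguished Fourier multiplier and then quote Theorem \ref{characterization of fourier multiplier} verbatim. By definition the (continuous) embedding $M_1^{s_1}\subseteq M_2^{s_2}$ is the a priori estimate $\|f\|_{M_2^{s_2}}\lesssim\|f\|_{M_1^{s_1}}$, which is precisely the statement that the identity operator is bounded from $M_1^{s_1}$ to $M_2^{s_2}$. The identity operator is the Fourier multiplier attached to the constant symbol $m\equiv 1$, because $T_1 f=\mathscr{F}^{-1}(1\cdot\mathscr{F}f)=f$. Hence the embedding is equivalent to $m\equiv 1\in\mathcal{M}_{\mathscr{F}}(M_1^{s_1},M_2^{s_2})$.

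I would then apply Theorem \ref{characterization of fourier multiplier} to $m\equiv 1$. Since $T_1=\mathrm{Id}$, the localized operators reduce to the bare decomposition pieces, $\Box_k^{\alpha_1\vee\alpha_2}T_1=\Box_k^{\alpha_1\vee\alpha_2}$, so that
\begin{equation*}
\|\Box_k^{\alpha_1\vee\alpha_2}T_1\|_{M_1\rightarrow M_2}=\|\Box_k^{\alpha_1\vee\alpha_2}\|_{M_1\rightarrow M_2}
\end{equation*}
for every $k\in\mathbb{Z}^n$ when $\alpha_1\vee\alpha_2<1$, and likewise $\Delta_jT_1=\Delta_j$ when $\alpha_1\vee\alpha_2=1$. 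Feeding these identities into the definition of the Wiener amalgam norm $\|m|~W^{\alpha_1\vee\alpha_2}(\cdots)\|$ turns the membership $m\equiv 1\in W^{\alpha_1\vee\alpha_2}(\cdots)$ into the assertion that the sequence $\{\|\Box_k^{\alpha_1\vee\alpha_2}\|_{M_1\rightarrow M_2}\}$ is a pointwise multiplier from $l_{q_1}^{s_1,\alpha_1\vee\alpha_2}$ to $l_{q_2}^{s_2,\alpha_1\vee\alpha_2}$, i.e.\ belongs to $\mathcal{M}_p(l_{q_1}^{s_1,\alpha_1\vee\alpha_2},l_{q_2}^{s_2,\alpha_1\vee\alpha_2})$; the case $\alpha_1\vee\alpha_2=1$ is identical with the $W^1$-norm and the sequence space $l_{q}^{s,1}$. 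Combining the previous paragraph with Theorem \ref{characterization of fourier multiplier} then yields the stated equivalence.

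The computation above is entirely formal, so the only point deserving attention is the identification in the first paragraph between the continuous embedding of the full spaces and the Fourier-multiplier estimate, which is phrased for Schwartz inputs. One direction is immediate because $\mathscr{S}\subset M_1^{s_1}$; for the converse I would note that the $\alpha$-modulation quasi-norms are computed from the tempered-distribution data $\{\Box_k^{\alpha}f\}$, so the Schwartz estimate supplied by Theorem \ref{characterization of fourier multiplier} propagates to arbitrary $f\in M_1^{s_1}$ by the standard density/limiting argument, invoking lower semicontinuity of the target quasi-norm in the ranges where Schwartz density fails (e.g.\ $q=\infty$). With this identification the corollary is an immediate specialization of Theorem \ref{characterization of fourier multiplier}, the genuine analytic content having already been carried by that theorem.
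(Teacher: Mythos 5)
Your proposal is correct and is essentially the paper's own argument: the paper likewise observes that the embedding is the boundedness of the identity operator, i.e.\ the Fourier multiplier with symbol $m\equiv 1$, and then invokes Theorem \ref{characterization of fourier multiplier} directly. Your extra remark on passing from Schwartz inputs to general $f\in M_{p_1,q_1}^{s_1,\alpha_1}$ is a small point of rigor the paper leaves implicit, not a different route.
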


\section{Asymptotic estimates for local operators}

In this section, we establish some asymptotic estimates for local operators
between $\alpha$-modulation spaces. For $0<p_1, p_2, q\leqslant\infty$ and $%
(\alpha_1,\alpha_2)\in[0,1]\times[0,1]$, we denote
\begin{equation*}  \label{notation-A}
A (\mathbf{p},q;\alpha_1,\alpha_2)=
\begin{cases}
\begin{aligned} \left[n\alpha_1(\frac{1}{p_1}-\frac{1}{p_2})\right]\vee
\left[n\alpha_2(1-\frac{1}{p_2})-n\alpha_1(1-\frac{1}{p_1})-n(\alpha_2-\alpha_1)\frac{1}{q}\right]
\\ \vee
\left[n(\alpha_2-\alpha_1)(\frac{1}{p_2}-\frac{1}{q})+n\alpha_1(%
\frac{1}{p_1}-\frac{1}{p_2})\right], ~\text{if}~\alpha_1\leq \alpha_2, \\
\left[n\alpha_2(\frac{1}{p_1}-\frac{1}{p_2})\right]\vee
\left[n\alpha_2(1-\frac{1}{p_2})-n\alpha_1(1-\frac{1}{p_1})-n(\alpha_2-%
\alpha_1)\frac{1}{q}\right] \\ \vee
\left[n(\alpha_1-\alpha_2)(\frac{1}{q}-\frac{1}{p_1})+n\alpha_2(%
\frac{1}{p_1}-\frac{1}{p_2})\right], ~\text{if}~\alpha_1> \alpha_2.
\end{aligned}%
\end{cases}%
\end{equation*}

\begin{lemma}[\textbf{Asymptotic estimates}]
\label{asymptotic estimates} Let $0<p_{1}\leq p_{2}\leq \infty $, $0<q\leq
\infty ,$ $s_{i}\in \mathbb{R}$, $\alpha _{i}\in \lbrack 0,1]$ for $i=1,2$.
We have
\begin{equation}
\left\Vert \Box _{k}^{\alpha _{1}\vee \alpha
_{2}}|~M_{p_{1},q}^{0,\alpha _{1}}\rightarrow M_{p_{2},q}^{0,\alpha
_{2}}\right\Vert \sim \langle k\rangle ^{\frac{A(\mathbf{p},q,\alpha
_{1},\alpha _{2})}{1-\alpha _{1}\vee \alpha _{2}}}
\end{equation}%
for $\alpha _{1}\vee \alpha _{2}<1$ and $k\in \mathbb{Z}^{n}$. Also
\begin{equation}
\left\Vert \Delta _{j}|~M_{p_{1},q}^{0,\alpha _{1}}\rightarrow
M_{p_{2},q}^{0,\alpha _{2}}\right\Vert \sim 2^{jA(\mathbf{p},q,\alpha
_{1},\alpha _{2})}
\end{equation}%
for $\alpha _{1}\vee \alpha _{2}=1$ and $j\in \{0\}\cup \mathbb{Z}^{+}$.
\end{lemma}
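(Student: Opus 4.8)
The plan is to reduce the estimate to a single extremal problem over frequency blocks, and then to match an upper bound obtained by complex interpolation against a lower bound obtained by three explicit constructions, one for each of the three affine pieces of $A$. Throughout write $\alpha=\alpha_1\vee\alpha_2$ and $R=\langle k\rangle^{1/(1-\alpha)}$, so that the $k$-th $\alpha$-ball sits at frequency magnitude $\sim R$ with radius $\sim R^{\alpha}$, and the claim reads $\|\Box_k^{\alpha}\,|\,M_1\to M_2\|\sim R^{A}$ (and $2^{jA}$ in the Besov case, with $R=2^j$). Treating $\alpha_1\le\alpha_2$ first (so $\alpha=\alpha_2$; the case $\alpha_1>\alpha_2$ is symmetric after interchanging the two decompositions), I would note that $\Box_k^{\alpha_2}f$ is supported in one $\alpha_2$-ball, so $\|\Box_k^{\alpha_2}f\|_{M_2}\sim\|\Box_k^{\alpha_2}f\|_{L^{p_2}}$, while $\|f\|_{M_1}\sim\big\|\{\|g_l\|_{L^{p_1}}\}_l\big\|_{\ell^{q}}$ with $g_l:=\Box_l^{\alpha_1}f$. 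Since $\Box_k^{\alpha_2}$ acts as the identity on the $\alpha_1$-blocks interior to the $\alpha_2$-ball (the set $\widetilde{\Gamma_k^{\alpha_1,\alpha_2}}$) and is harmless on the boundary ones, the operator norm is comparable to the sharp constant $C(k)$ in $\big\|\sum_{l\in\Gamma_k^{\alpha_1,\alpha_2}}g_l\big\|_{L^{p_2}}\lesssim C(k)\,\big\|\{\|g_l\|_{L^{p_1}}\}\big\|_{\ell^{q}}$, over all $g_l$ with $\widehat{g_l}$ supported in the respective $\alpha_1$-balls, where the number of blocks is $N:=|\Gamma_k^{\alpha_1,\alpha_2}|\sim R^{n(\alpha_2-\alpha_1)}$ and each $\alpha_1$-ball has radius $r_1\sim R^{\alpha_1}$.

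For the lower bound I would exhibit three test configurations, each saturating one term of $A$. First, a single bump on one $\alpha_1$-ball, for which sharpness of Lemma \ref{embedding of Lp with Fourier compact support} gives the ratio $R^{n\alpha_1(1/p_1-1/p_2)}$, the first term. Second, a coherent superposition $g_l=e^{2\pi i\xi_l\cdot x}\phi$, with $\phi$ a fixed bump adapted to an $\alpha_1$-ball and $\xi_l$ the centers of the $N$ balls: the phases align to produce a Dirichlet-type peak of height $\sim N$ on a ball of radius $\sim R^{-\alpha_2}$, and computing the ratio $\|\sum_l g_l\|_{L^{p_2}}/\|\{\|g_l\|_{L^{p_1}}\}\|_{\ell^q}$ reproduces exactly the second term. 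Third, an incoherent spread realizing $\ell^{p_2}$-orthogonality of the blocks together with the Hölder loss $N^{1/p_2-1/q}$, which yields the third term. Taking the best of the three gives $\|\Box_k^{\alpha_2}\,|\,M_1\to M_2\|\gtrsim R^{\max_i A_i}=R^{A}$.

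For the upper bound I would split the parameter cube in $(1/p_1,1/p_2,1/q)$ into the (at most three) sub-regions on which $A$ equals one affine piece $A_i$. On each region it suffices, by convexity of $\log\|\Box_k^{\alpha_2}\,|\,M_1\to M_2\|$ along complex-interpolation segments (Lemma \ref{positive result for complex interpolation}, applied to the fixed localization on both the input and the output scales, keeping the shared exponent $q$ equal), to verify the bound $\lesssim R^{A_i}$ at the vertices of that region; since $A_i$ is affine there, no convexity gap appears and the interpolated bound is sharp. At the vertices the bound is proved by elementary estimates: per-block Bernstein (Lemma \ref{embedding of Lp with Fourier compact support}) to pass from $L^{p_1}$ to $L^{p_2}$ at cost $r_1^{n(1/p_1-1/p_2)}$; a disjoint-frequency block inequality $\big\|\sum_l g_l\big\|_{L^{p_2}}\lesssim\big\|\{\|g_l\|_{L^{p_2}}\}\big\|_{\ell^{r}}$ for the appropriate $r$ (orthogonality at $p_2=2$, the triangle inequality at $p_2\in\{1,\infty\}$, and the quasi-norm convolution bound of Lemma \ref{convolution} when $p_2<1$); and the nesting $\ell^{q}\hookrightarrow\ell^{r}$ combined with the counting $N\sim R^{n(\alpha_2-\alpha_1)}$. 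The case $\alpha_1>\alpha_2$ and the endpoint $\alpha=1$ (with $\Delta_j$ and the dyadic annulus of radius $2^j$) follow by the same scheme after swapping the two decompositions.

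The main obstacle will be the upper bound in the full quasi-Banach range: establishing the sharp disjoint-frequency block inequalities when $p_2<1$ or $q<1$, where $L^{p_2}$ is only a quasi-norm and duality and orthogonality arguments break down, so one must rely on Lemma \ref{convolution} together with careful counting of overlaps; and organizing the interpolation so that each segment stays inside a single affine region of $A$, since otherwise convexity of the interpolated norm would overshoot the maximum rather than reproduce it. A secondary delicate point is verifying that the coherent construction is admissible for every relevant value of $p_2$, i.e.\ that the prescribed phase alignment really produces the claimed peak.
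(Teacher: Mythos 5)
Your proposal is correct and follows essentially the same route as the paper: the lower bound via three test configurations (a single narrow bump, a bump at the coarser scale---which the paper realizes directly as $f_k^{\alpha_2}$ rather than as your coherent modulated sum, sidestepping your worry about phase alignment---and widely separated translates $\sum_l T_{Nl}f_l^{\alpha_1}$), and the upper bound by partitioning the $(1/p_1,1/p_2,1/q)$ region into the pieces where $A=A_i$, proving vertex estimates with Bernstein and the quasi-Banach convolution lemma, and interpolating within each piece. No substantive difference.
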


\begin{proof}
In this proof, we denote $M_i=M_{p_i,q}^{0,\alpha_i}$ for simplicity.
We only state the proof for the case $\alpha_1, \alpha_2<1$, since the proof
of other cases are similar.
\\
\textbf{Case 1. $\alpha_1\leq \alpha_2<1$.}
In this case, we need to show
\begin{equation}
\left\|\Box_k^{\alpha_2}|~M_1\rightarrow M_2\right\| \sim \langle k\rangle^{
\frac{A(\mathbf{p},q,\alpha_1,\alpha_2)}{1-\alpha_2}}\sim 2^{jA(\mathbf{p},q,\alpha_1,\alpha_2)}.
\end{equation}
for each $k\in \mathbb{Z}^n$, $j\in \{0\}\cup \mathbb{Z}^{+}$ and $\langle k\rangle^{\frac{1}{1-\alpha_2}}\sim 2^j$.

Denote
\begin{equation}
  \begin{split}
    A_1(\mathbf{p},q,\alpha_1,\alpha_2)=& n\alpha_1(1/p_1-1/p_2),
    \\
    A_2(\mathbf{p},q,\alpha_1,\alpha_2)=& n\alpha_{2}(1-1/p_{2})-n\alpha _{1}(1-1/p_{1})-n(\alpha _{2}-\alpha _{1})/q,
    \\
    A_3(\mathbf{p},q,\alpha_1,\alpha_2)=& n(\alpha_2-\alpha_1)(1/p_2-1/q)+n\alpha_1(1/p_1-1/p_2).
  \end{split}
\end{equation}
Obviously, we have $A(\mathbf{p},q,\alpha_1,\alpha_2)=\max_{i=1,2,3}A_i(\mathbf{p},q,\alpha_1,\alpha_2)$ for $\alpha_1\leq \alpha_2$.

\textbf{Lower bound estimates.} In this part, we use some special functions
to test the operator norms. Take a smooth function $f$ whose Fourier
transform $\widehat{f}$ has small support near the origin such that $\mathbf{%
supp}\widehat{f_k^{\alpha}}\subset \widetilde{\mathbf{supp}\eta_k^{\alpha}}$
for every $k\in \mathbb{Z}^n$, $\alpha\in [0,1)$, where we denote
\begin{equation}
\widehat{f_k^{\alpha}}=\widehat{f}(\frac{\xi-\langle k\rangle^{\frac{\alpha}{%
1-\alpha}}k}{\langle k\rangle^{\frac{\alpha}{1-\alpha}}}).
\end{equation}

Firstly, we have
\begin{equation}  \label{lower bound 1,1}
\begin{split}
\left\|\Box_k^{\alpha_2}|~M_1\rightarrow M_2\right\| \gtrsim \frac{%
\|\Box_k^{\alpha_2} f_l^{\alpha_1}\|_{M_2}}{\|f_l^{\alpha_1}\|_{M_1}} \sim
\frac{\|f_l^{\alpha_1}\|_{L^{p_2}}}{\|f_l^{\alpha_1}\|_{L^{p_1}}} \sim&
2^{jn\alpha_1(1/p_1-1/p_2)} \\
=& 2^{jA_1(\mathbf{p},q,\alpha_1,\alpha_2)}
\end{split}%
\end{equation}
for some suitable $l\in \mathbb{Z}^n$ such that $\langle l\rangle^{\frac{1}{%
1-\alpha_1}}\sim\langle k\rangle^{\frac{1}{1-\alpha_2}}\sim 2^j$.

Next, a direct calculation yields that
\begin{equation}
\Vert \Box _{k}^{\alpha _{2}}f_{k}^{\alpha _{2}}\Vert _{M_{2}}=\Vert
f_{k}^{\alpha _{2}}\Vert _{M_{2}}\sim \Vert f_{k}^{\alpha _{2}}\Vert
_{L^{p_{2}}}\sim 2^{jn\alpha _{2}(1-1/p_{2})}
\end{equation}%
and
\begin{equation}
\begin{split}
\Vert f_{k}^{\alpha _{2}}\Vert _{M_{1}}=\left( \sum_{l\in \mathbb{Z}%
^{n}}\langle l\rangle ^{\frac{sq}{1-\alpha _{1}}}\Vert \Box _{l}^{\alpha
_{1}}f_{k}^{\alpha _{2}}\Vert _{L^{p}}^{q}\right) ^{1/q}& \lesssim \bigg(%
\sum_{l\in \Gamma _{k}^{\alpha _{1},\alpha _{2}}}\langle l\rangle ^{\frac{sq%
}{1-\alpha }}\Vert \mathscr{F}^{-1}\eta _{l}^{\alpha _{1}}\Vert _{L^{p}}^{q}%
\bigg)^{\frac{1}{q}} \\
& \lesssim 2^{j\alpha _{1}n(1-1/p_{1})}2^{j(\alpha _{2}-\alpha _{1})n/q}.
\end{split}%
\end{equation}%
So we have
\begin{equation}\label{lower bound 1,2}
\begin{split}
\left\Vert \Box _{k}^{\alpha _{2}}|~M_{1}\rightarrow M_{2}\right\Vert
\gtrsim & \frac{\Vert \Box _{k}^{\alpha _{2}}f_{k}^{\alpha _{2}}\Vert
_{M_{2}}}{\Vert f_{k}^{\alpha _{2}}\Vert _{M_{1}}}\gtrsim \frac{2^{jn\alpha
_{2}(1-1/p_{2})}}{2^{j\alpha _{1}n(1-1/p_{1})}2^{j(\alpha _{2}-\alpha
_{1})n/q}} \\
=& 2^{jn\alpha _{2}(1-1/p_{2})}2^{-jn\alpha _{1}(1-1/p_{1})}2^{-jn(\alpha
_{2}-\alpha _{1})/q} \\
=& 2^{jA_{2}(\mathbf{p},q,\alpha _{1},\alpha _{2})}.
\end{split}
\end{equation}

Finally, let
\begin{equation}
F_{k,N}=\sum_{l\in \widetilde{\Gamma _{k}^{\alpha _{1},\alpha _{2}}}%
}T_{Nl}f_{l}^{\alpha _{1}},
\end{equation}%
where $T_{Nl}$ denotes the translation operator: $T_{Nl}f(x)=f(x-Nl)$. By
the almost orthogonality of $\{T_{Nl}f_{l}^{\alpha _{1}}\}_{l\in \widetilde{%
\Gamma _{k}^{\alpha _{1},\alpha _{2}}}}$ as $N\rightarrow \infty $, we
deduce that
\begin{equation}
\Vert \Box _{k}^{\alpha _{2}}F_{k,N}\Vert _{M_{2}}=\Vert F_{k,N}\Vert
_{M_{2}}=\Vert F_{k,N}\Vert _{L^{p_2}}\sim 2^{jn(\alpha _{2}-\alpha
_{1})/p_{2}}2^{jn\alpha _{1}(1-1/p_{2})}
\end{equation}%
as $N\rightarrow \infty $.

On the other hand,
\begin{equation}
\begin{split}
\|F_{k,N}\|_{M_1} &=\left(\sum_{l\in \widetilde{\Gamma_k^{\alpha_1,\alpha_2}}%
}\|f_l^{\alpha_1}\|^{q}_{L^{p_1}}\right)^{\frac{1}{q}} \\
&\sim |\widetilde{\Gamma_k^{\alpha_1,\alpha_2}}|^{1/q}2^{jn\alpha_1(1-1/p_1)}
\\
&\sim 2^{jn(\alpha_2-\alpha_1)/q}2^{jn\alpha_1(1-1/p_1)}.
\end{split}%
\end{equation}
So by the definition of operator norm, we have
\begin{equation}  \label{lower bound 1,3}
\begin{split}
\left\|\Box_k^{\alpha_2}|~M_1\rightarrow M_2\right\| \gtrsim&
\lim_{N\rightarrow \infty}\frac{\|\Box_k^{\alpha_2}F_{k,N}\|_{M_2}}{%
\|F_{k,N}\|_{M_1}} \sim \frac{2^{jn(\alpha_2-\alpha_1)/p_2}2^{jn%
\alpha_1(1-1/p_2)}}{2^{jn(\alpha_2-\alpha_1)/q}2^{jn\alpha_1(1-1/p_1)}} \\
\sim & 2^{jn(\alpha_2-\alpha_1)(1/p_2-1/q)}2^{jn\alpha_1(1/p_1-1/p_2)} \\
=& 2^{jA_3(\mathbf{p},q,\alpha_1,\alpha_2)}.
\end{split}
\end{equation}
Taking together these
estimates, we have the lower bounds
\begin{equation}
\left\|\Box_k^{\alpha_2}|~M_1\rightarrow M_2\right\| \gtrsim \langle
k\rangle^{\frac{A_i(\mathbf{p},q,\alpha_1,\alpha_2)}{1-\alpha}}
\end{equation}
for $i=1,2,3$. Recalling $A(\mathbf{p},q,\alpha_1,\alpha_2)=%
\max_{i=1,2,3}A_i(\mathbf{p},q,\alpha_1,\alpha_2)$, we complete the lower
bound estimates.

\textbf{Upper bound estimates.} Now, we turn to the estimate of upper bound.
Denote
\begin{equation*}
\begin{split}
S=&\{(1/p_1,1/p_2,1/q)\in [0, \infty)^3:~1/p_2\leq 1/p_1\}, \\
S_1=&S\cap \{(1/p_1,1/p_2,1/q):~1/q\geq 1-1/p_2, 1/q\geq 1/p_2\}, \\
S_2=&S\cap \{(1/p_1,1/p_2,1/q):~1/q\leq 1-1/p_2, 1/p_2\leq 1/2\}, \\
S_3=&S\cap \{(1/p_1,1/p_2,1/q):~1/q\leq 1/p_2, 1/p_2\geq 1/2\}.
\end{split}%
\end{equation*}
Obviously, we have
\begin{equation*}
S=S_1\cup S_3 \cup S_3,
\end{equation*}
and
\begin{equation*}
A(\mathbf{p},q,\alpha_1,\alpha_2)=A_i(\mathbf{p},q,\alpha_1,\alpha_2)
\end{equation*}
for $(1/p_1,1/p_2,1/q)\in S_i, \alpha_1\leq \alpha_2$. To verify
$\left\|\Box_k^{\alpha_2}|~M_1\rightarrow M_2\right\| \lesssim \langle
k\rangle^{\frac{A(\mathbf{p},q,\alpha_1,\alpha_2)}{1-\alpha_2}}$, we only need to verify that
\begin{equation*}
  \left\|\Box_k^{\alpha_2}|~M_1\rightarrow M_2\right\| \lesssim \langle
k\rangle^{\frac{A_j(\mathbf{p},q,\alpha_1,\alpha_2)}{1-\alpha_2}}
\end{equation*}
in $S_j$ for $j=1,2,3$.

For $S_1$, we want to verify
\begin{equation}
\left\|\Box_k^{\alpha_2}|~M_1\rightarrow M_2\right\| \lesssim \langle
k\rangle^{\frac{A_1(\mathbf{p},q,\alpha_1,\alpha_2)}{1-\alpha_2}}.
\end{equation}
In fact, once we obtain the estimates for the following 4 cases, the upper
bound in $S_1$ can be deduced by Lemma \ref{positive result for complex
interpolation}.

\textbf{Case 1.1.} $1/p_2=1/q=1/2$, $1/p_2\leq 1/p_1$. We have
\begin{equation}
\|\Box_k^{\alpha_2}f\|_{M_2}=\|\Box_k^{\alpha_2}f\|_{M_{2,2}^{0,\alpha_2}}=\|\Box_k^{\alpha_2}f\|_{M_{2,2}^{0,\alpha_1}}
=\left(\sum_{l\in \Gamma_k^{\alpha_1,\alpha_2}}\|\Box_l^{\alpha_1}\Box_k^{\alpha_2}f\|^2_{L^2}\right)^{1/2}.
\end{equation}
Then we use Lemma \ref{embedding of Lp with Fourier compact support} and Lemma \ref{lemma, convolution} to
deduce that
\begin{equation}
\begin{split}
\|\Box_k^{\alpha_2}f\|_{M_2} \lesssim& \left(\sum_{l\in
\Gamma_k^{\alpha_1,\alpha_2}}\|\Box_l^{\alpha_1}f\|^2_{L^2}\right)^{1/2} \\
\lesssim& 2^{jn\alpha_1(1/p_1-1/p_2)}\left(\sum_{l\in
\Gamma_k^{\alpha_1,\alpha_2}}\|\Box_l^{\alpha_1}f\|^2_{L^{p_1}}\right)^{1/2}
\\
\lesssim& 2^{jn\alpha_1(1/p_1-1/p_2)}\|f\|_{M_1}.
\end{split}%
\end{equation}
Moreover, in this case, we have
\begin{equation}
2^{jn\alpha_1(1/p_1-1/p_2)}=2^{jA_1(\mathbf{p},q,\alpha_1,\alpha_2)}=2^{jA_2(%
\mathbf{p},q,\alpha_1,\alpha_2)}=2^{jA_3(\mathbf{p},q,\alpha_1,\alpha_2)}.
\end{equation}

\textbf{Case 1.2.} $1/p_2=1/p_1=0$, $1/q\geq 1$. We have
\begin{equation}
\begin{split}
\|\Box_k^{\alpha_2}f\|_{M_2} =& \|\Box_k^{\alpha_2}f\|_{L^{\infty}}
\lesssim  \sum_{l\in
\Gamma_k^{\alpha_1,\alpha_2}}\|\Box_l^{\alpha_1}f\|_{L^{\infty}}
\\
\lesssim &
\left(\sum_{l\in
\Gamma_k^{\alpha_1,\alpha_2}}\|\Box_l^{\alpha_1}f\|^q_{L^{\infty}}%
\right)^{1/q} \lesssim \|f\|_{M_1}=2^{jA_1(\mathbf{p},q,\alpha_1,\alpha_2)}\|f\|_{M_1}.
\end{split}%
\end{equation}

\textbf{Case 1.3.} $1/p_2=1/p_1=1/q\geq 1$. We use Lemma \ref{convolution} to
deduce that
\begin{equation}
\begin{split}
\|\Box_k^{\alpha_2}f\|_{M_2} =& \|\Box_k^{\alpha_2}f\|_{L^{p_2}} =
\|\sum_{l\in
\Gamma_k^{\alpha_1,\alpha_2}}\Box_l^{\alpha_1}\Box_k^{\alpha_2}f\|_{L^{p_2}}
\\
\lesssim & \left(\sum_{l\in
\Gamma_k^{\alpha_1,\alpha_2}}\|\Box_l^{\alpha_1}f\|^{p_2}_{L^{p_2}}%
\right)^{1/p_2} \lesssim \|f\|_{M_1}.
\end{split}%
\end{equation}
Moreover, in this case, we have
\begin{equation}
1=2^{jA_1(\mathbf{p},q,\alpha_1,\alpha_2)}=2^{jA_3(\mathbf{p}%
,q,\alpha_1,\alpha_2)}.
\end{equation}

\textbf{Case 1.4.} $1/p_2=0$, $1/q=1.$ We have that
\begin{equation}
\begin{split}
\|\Box_k^{\alpha_2}f\|_{M_2} =& \|\Box_k^{\alpha_2}f\|_{L^{\infty}} \lesssim
\sum_{l\in \Gamma_k^{\alpha_1,\alpha_2}}\|\Box_l^{\alpha_1}f\|_{L^{\infty}}
\\
\lesssim & 2^{jn\alpha_1(1/p_1-1/p_2)}\sum_{l\in
\Gamma_k^{\alpha_1,\alpha_2}}\|\Box_l^{\alpha_1}f\|_{L^{p_1}} \\
\lesssim & 2^{jn\alpha_1(1/p_1-1/p_2)}\|f\|_{M_1}.
\end{split}%
\end{equation}
Moreover, in this case, we have
\begin{equation}
2^{jn\alpha_1(1/p_1-1/p_2)}=2^{jA_1(\mathbf{p},q,\alpha_1,\alpha_2)}=2^{jA_2(%
\mathbf{p},q,\alpha_1,\alpha_2)}.
\end{equation}

Combining with the estimates of Case 1.1, Case 1.2, Case 1.3, Case 1.4, we use the
interpolation theory to obtain the upper bound estimates for $S_1$.

For $S_{2}$, we want to verify
\begin{equation}
\left\Vert \Box _{k}^{\alpha _{2}}|~M_{1}\rightarrow M_{2}\right\Vert
\lesssim \langle k\rangle ^{\frac{A_{2}(\mathbf{p},q,\alpha _{1},\alpha _{2})%
}{1-\alpha _{2}}}.
\end{equation}%
To this end, we need the estimates in the following Case 1.5 and Case 1.6.

\textbf{Case 1.5.} $1/2=1/p_2\leq1/p_1$, $1/q=0$. We have that
\begin{equation}
\|\Box_k^{\alpha_2}f\|_{M_2}=\|\Box_k^{\alpha_2}f\|_{L^2}=\|\Box_k^{%
\alpha_2}f\|_{M_{2,2}^{0,\alpha_1}}.
\end{equation}
It then yields
\begin{equation}
\begin{split}
\|\Box_k^{\alpha_2}f\|_{M_2} \lesssim& \left(\sum_{l\in
\Gamma_k^{\alpha_1,\alpha_2}}\|\Box_l^{\alpha_1}f\|^2_{L^2}\right)^{1/2} \\
\lesssim& |\Gamma_k^{\alpha_1,\alpha_2}|^{1/2}\sup_{l\in
\Gamma_k^{\alpha_1,\alpha_2}}\|\Box_l^{\alpha_1}f\|_{L^{2}} \\
\lesssim&
|\Gamma_k^{\alpha_1,\alpha_2}|^{1/2}2^{jn\alpha_1(1/p_1-1/p_2)}\sup_{l\in
\Gamma_k^{\alpha_1,\alpha_2}}\|\Box_l^{\alpha_1}f\|_{L^{p_1}} \\
\lesssim&
2^{jn(\alpha_2-\alpha_1)/2}2^{jn\alpha_1(1/p_1-1/p_2)}\|f\|_{M_{p_1,q}^{0,%
\alpha_1}}\sim2^{jA_2(\mathbf{p},q,\alpha_1,\alpha_2)}\|f\|_{M_1}.
\end{split}%
\end{equation}
Moreover, in this case, we have
\begin{equation}
2^{jA_2(\mathbf{p},q,\alpha_1,\alpha_2)}=2^{jA_3(\mathbf{p}%
,q,\alpha_1,\alpha_2)}.
\end{equation}

\textbf{Case 1.6.} $1/p_2=1/q=0$. We have
\begin{equation}
\|\Box_k^{\alpha_2}f\|_{M_2} = \|\Box_k^{\alpha_2}f\|_{L^{\infty}} \\
\lesssim \sum_{l\in
\Gamma_k^{\alpha_1,\alpha_2}}\|\Box_l^{\alpha_1}f\|_{L^{\infty}}.
\end{equation}
It leads to
\begin{equation}
\begin{split}
\|\Box_k^{\alpha_2}f\|_{M_2} \lesssim & \sum_{l\in
\Gamma_k^{\alpha_1,\alpha_2}}\|\Box_l^{\alpha_1}f\|_{L^{\infty}} \\
\lesssim& |\Gamma_k^{\alpha_1,\alpha_2}|\sup_{l\in
\Gamma_k^{\alpha_1,\alpha_2}}\|\Box_l^{\alpha_1}f\|_{L^{\infty}} \\
\lesssim&
|\Gamma_k^{\alpha_1,\alpha_2}|2^{jn\alpha_1(1/p_1-1/p_2)}\sup_{l\in
\Gamma_k^{\alpha_1,\alpha_2}}\|\Box_l^{\alpha_1}f\|_{L^{p_1}} \\
\lesssim&
2^{jn(\alpha_2-\alpha_1)}2^{jn\alpha_1(1/p_1-1/p_2)}\|f\|_{M_{p_1,q}^{0,%
\alpha_1}}\sim2^{jA_2(\mathbf{p},q,\alpha_1,\alpha_2)}\|f\|_{M_1}.
\end{split}%
\end{equation}
Combining with the estimates of Case 1.1, Case 1.4, Case 1.5, Case 1.6, we use the
interpolation theory to deduce the upper bound estimates for $S_2$.

For $S_3$, we want to verify
\begin{equation}
\left\|\Box_k^{\alpha_2}|~M_1\rightarrow M_2\right\| \lesssim \langle
k\rangle^{\frac{A_3(\mathbf{p},q,\alpha_1,\alpha_2)}{1-\alpha_2}}.
\end{equation}
We further need the estimate in following.

\textbf{Case 1.7.} $1/p_2=1/p_1\geq 1$, $1/q=0$. In this case, we have that
\begin{equation}
\begin{split}
\|\Box_k^{\alpha_2}f\|_{M_2} =& \|\Box_k^{\alpha_2}f\|_{L^{p_2}} =
\|\sum_{l\in
\Gamma_k^{\alpha_1,\alpha_2}}\Box_l^{\alpha_1}\Box_k^{\alpha_2}f\|_{L^{p_2}}
\\
\lesssim & \left(\sum_{l\in
\Gamma_k^{\alpha_1,\alpha_2}}\|\Box_l^{\alpha_1}f\|^{p_2}_{L^{p_2}}%
\right)^{1/p_2} \\
\lesssim & |\Gamma_k^{\alpha_1,\alpha_2}|^{1/p_2}\sup_{l\in
\Gamma_k^{\alpha_1,\alpha_2}}\|\Box_l^{\alpha_1}f\|_{L^{p_2}} \\
\lesssim & 2^{jn(\alpha_2-\alpha_1)/p_2}\|f\|_{M_1} \sim 2^{jA_3(\mathbf{p}%
,q,\alpha_1,\alpha_2)}\|f\|_{M_1}.
\end{split}%
\end{equation}
Combining with the estimates in Case 1.1, Case 1.3, Case 1.5, Case 1.7, we use the
interpolation theory to deduce the upper bound estimates for $S_3$.
\\
\textbf{Case 2. $\alpha_2< \alpha_1<1$.}
In this case, we need to show
\begin{equation}
\left\|\Box_k^{\alpha_1}|~M_1\rightarrow M_2\right\| \sim \langle k\rangle^{
\frac{A(\mathbf{p},q,\alpha_1,\alpha_2)}{1-\alpha_1}}\sim 2^{jA(\mathbf{p},q,\alpha_1,\alpha_2)}.
\end{equation}
for each $k\in \mathbb{Z}^n$, $j\in \{0\}\cup \mathbb{Z}^{+}$ and $\langle k\rangle^{\frac{1}{1-\alpha_1}}\sim 2^j$.

Denote
\begin{equation}
  \begin{split}
    \widetilde{A_1}(\mathbf{p},q,\alpha_1,\alpha_2)=& n\alpha_2(1/p_1-1/p_2)
    \\
    \widetilde{A_2}(\mathbf{p},q,\alpha_1,\alpha_2)=& n\alpha_{2}(1-1/p_{2})-n\alpha _{1}(1-1/p_{1})-n(\alpha _{2}-\alpha _{1})/q
    \\
    \widetilde{A_3}(\mathbf{p},q,\alpha_1,\alpha_2)=& n(\alpha_1-\alpha_2)(1/q-1/p_1)+n\alpha_2(1/p_1-1/p_2).
  \end{split}
\end{equation}
We have $A(\mathbf{p},q,\alpha_1,\alpha_2)=\max_{i=1,2,3}\widetilde{A_i}(\mathbf{p},q,\alpha_1,\alpha_2)$ for $\alpha_1>\alpha_2$.

\textbf{Lower bound estimates.} As in the Case 1, we take a smooth function $f$ whose Fourier
transform $\widehat{f}$ has small support near the origin such that $\mathbf{%
supp}\widehat{f_k^{\alpha}}\subset \widetilde{\mathbf{supp}\eta_k^{\alpha}}$
for every $k\in \mathbb{Z}^n$, $\alpha\in [0,1)$.

By the same method in Case 1, we have the following estimates.
Firstly, we have
\begin{equation}  \label{lower bound 2,1}
\begin{split}
\left\|\Box_k^{\alpha_1}|~M_1\rightarrow M_2\right\| \gtrsim \frac{
\|\Box_k^{\alpha_1} f_l^{\alpha_2}\|_{M_2}}{\|f_l^{\alpha_2}\|_{M_1}}
\sim
2^{j\widetilde{A_1}(\mathbf{p},q,\alpha_1,\alpha_2)}
\end{split}%
\end{equation}
for some suitable $l\in \mathbb{Z}^n$ such that $\langle l\rangle^{\frac{1}{%
1-\alpha_1}}\sim\langle k\rangle^{\frac{1}{1-\alpha_2}}\sim 2^j$.

We also deduce
\begin{equation}\label{lower bound 2,2}
\begin{split}
\left\Vert \Box _{k}^{\alpha _{1}}|~M_{1}\rightarrow M_{2}\right\Vert
\gtrsim & \frac{\Vert \Box _{k}^{\alpha _{1}}f_{k}^{\alpha _{1}}\Vert
_{M_{2}}}{\Vert f_{k}^{\alpha _{1}}\Vert _{M_{1}}}
\gtrsim
2^{j\widetilde{A_{2}}(\mathbf{p},q,\alpha _{1},\alpha _{2})}.
\end{split}
\end{equation}

Finally, take
\begin{equation}
G_{k,N}=\sum_{l\in \widetilde{\Gamma _{k}^{\alpha _{2},\alpha _{1}}}%
}T_{Nl}f_{l}^{\alpha _{2}}
\end{equation}
We deduce
\begin{equation}  \label{lower bound 2,3}
\begin{split}
\left\|\Box_k^{\alpha_1}|~M_1\rightarrow M_2\right\|
\gtrsim
\lim_{N\rightarrow \infty}\frac{\|\Box_k^{\alpha_1}G_{k,N}\|_{M_2}}{%
\|G_{k,N}\|_{M_1}}
\sim
2^{j\widetilde{A_3}(\mathbf{p},q,\alpha_1,\alpha_2)}.
\end{split}
\end{equation}
Taking together these
estimates, we have the lower bounds
\begin{equation}
\left\|\Box_k^{\alpha_1}|~M_1\rightarrow M_2\right\| \gtrsim \langle
k\rangle^{\frac{\widetilde{A_i}(\mathbf{p},q,\alpha_1,\alpha_2)}{1-\alpha}}
\end{equation}
for $i=1,2,3$. Recalling $A(\mathbf{p},q,\alpha_1,\alpha_2)=%
\max_{i=1,2,3}\widetilde{A_i}(\mathbf{p},q,\alpha_1,\alpha_2)$, we complete the lower bound estimates for this case.

\textbf{Upper bound estimates.} We turn to the estimate of upper bound in this case.
We divide the area
\begin{equation*}
S=\{(1/p_1,1/p_2,1/q)\in [0, \infty)^3:~1/p_2\leq 1/p_1\}
\end{equation*}
into 3 zones as following.
\begin{equation*}
\begin{split}
\widetilde{S_1}=&S\cap \{(1/p_1,1/p_2,1/q):~1/q\leq 1-1/p_1, 1/q\leq 1/p_1\}, \\
\widetilde{S_2}=&S\cap \{(1/p_1,1/p_2,1/q):~1/q\geq 1-1/p_1, 1/p_1\geq 1/2\}, \\
\widetilde{S_3}=&S\cap \{(1/p_1,1/p_2,1/q):~1/q\geq 1/p_1, 1/p_1\leq 1/2\}.
\end{split}%
\end{equation*}
One can easily verify that
\begin{equation*}
A(\mathbf{p},q,\alpha_1,\alpha_2)=\widetilde{A_i}(\mathbf{p},q,\alpha_1,\alpha_2)
\end{equation*}
for $(1/p_1,1/p_2,1/q)\in \widetilde{S_i}.$

For $\widetilde{S_1}$, we want to verify
\begin{equation}
\left\|\Box_k^{\alpha_1}|~M_1\rightarrow M_2\right\| \lesssim \langle
k\rangle^{\frac{\widetilde{A_1}(\mathbf{p},q,\alpha_1,\alpha_2)}{1-\alpha_1}}.
\end{equation}
We deduce the estimates for the following 4 cases, then the upper
bound in $\widetilde{S_1}$ can be deduced by Lemma \ref{positive result for complex
interpolation}.

\textbf{Case 2.1.} $1/p_1=1/q=1/2$, $1/p_2\leq 1/p_1$.
Using Lemma \ref{embedding of Lp with Fourier compact support}, We deduce
\begin{equation}
\begin{split}
  \|\Box_k^{\alpha_1}f\|_{M_2}
  = &\left(\sum_{l\in \Gamma_k^{\alpha_2,\alpha_1}} \|\Box_l^{\alpha_2}\Box_k^{\alpha_1}f\|^2_{L^{p_2}}\right)^{1/2}
  \\
  \lesssim & 2^{j\alpha_2n(1/p_1-1/p_2)}\left(\sum_{l\in \Gamma_k^{\alpha_2,\alpha_1}} \|\Box_l^{\alpha_2}\Box_k^{\alpha_1}f\|^2_{L^{2}}\right)^{1/2}
  \\
  = & 2^{j\alpha_2n(1/p_1-1/p_2)}\|\Box_k^{\alpha_1}f\|_{M_{2,2}^{\alpha_2}}
  \\
  \sim & 2^{j\alpha_2n(1/p_1-1/p_2)}\|\Box_k^{\alpha_1}f\|_{M_{2,2}^{\alpha_1}}
  \lesssim 2^{j\alpha_2n(1/p_1-1/p_2)}\|f\|_{M_1}.
\end{split}
\end{equation}
Moreover, in this case, we have
\begin{equation}
2^{jn\alpha_2(1/p_1-1/p_2)}=2^{j\widetilde{A_1}(\mathbf{p},q,\alpha_1,\alpha_2)}
=2^{j\widetilde{A_2}(\mathbf{p},q,\alpha_1,\alpha_2)}
=2^{j\widetilde{A_3}(\mathbf{p},q,\alpha_1,\alpha_2)}.
\end{equation}

\textbf{Case 2.2.} $1/p_2=1/p_1=0$. We obtain
\begin{equation}
  \begin{split}
    \|\Box_k^{\alpha_1}f\|_{M_2}
    = &
    \left(\sum_{l\in \Gamma_k^{\alpha_2,\alpha_1}}\|\Box_l^{\alpha_2}\Box_k^{\alpha_1}f\|^q_{L^{\infty}}\right)^{1/q}
    \\
    \lesssim &
    |\Gamma_k^{\alpha_2,\alpha_1}|^{1/q}\|\Box_k^{\alpha_1}f\|_{L^{\infty}}
    \\
    \lesssim &
    2^{j(\alpha_1-\alpha_2)n/q}\|f\|_{M_1}\sim 2^{j\widetilde{A_3}(\mathbf{p},q,\alpha_1,\alpha_2)}\|f\|_{M_1}.
  \end{split}
\end{equation}
We also notice that $\widetilde{A_3}(\mathbf{p},q,\alpha_1,\alpha_2)=\widetilde{A_1}(\mathbf{p},q,\alpha_1,\alpha_2)$ at $1/p_1=1/p_2=1/q=0$.

\textbf{Case 2.3.} $1/p_2=1/p_1\geq 1$, $1/q=0$.  We have
\begin{equation}
  \|\Box_k^{\alpha_1}\|_{M_2}=\sup_{l\in \Gamma_k^{\alpha_2,\alpha_1}}\|\Box_l^{\alpha_2}\Box_k^{\alpha_1}f\|_{L^{p_2}}.
\end{equation}
Using Lemma \ref{lemma, convolution}, we obtain
\begin{equation}
  \begin{split}
  \|\Box_l^{\alpha_2}\Box_k^{\alpha_1}f\|_{L^{p_2}}
  \lesssim &\langle k\rangle^{\frac{\alpha_1n(1/p_2-1)}{1-\alpha_1}} \|\mathscr{F}^{-1}\eta_l^{\alpha_2}\|_{L^{p_2}}\|\Box_k^{\alpha_1}f\|_{L^{p_2}}
  \\
  \sim &
  2^{jn(\alpha_2-\alpha_1)(1-1/p_2)}\|\Box_k^{\alpha_1}f\|_{L^{p_2}}.
  \end{split}
\end{equation}
Thus
\begin{equation}
  \begin{split}
  \|\Box_k^{\alpha_1}\|_{M_2}
  \lesssim &
  2^{jn(\alpha_2-\alpha_1)(1-1/p_2)}\|\Box_k^{\alpha_1}f\|_{L^{p_1}}
  \\
  \lesssim &
  2^{jn(\alpha_2-\alpha_1)(1-1/p_2)}\|f\|_{M_1}=2^{j\widetilde{A_2}(\mathbf{p},q,\alpha_1,\alpha_2)}\|f\|_{M_1}.
  \end{split}
\end{equation}
We also notice that $\widetilde{A_2}(\mathbf{p},q,\alpha_1,\alpha_2)=\widetilde{A_1}(\mathbf{p},q,\alpha_1,\alpha_2)$ at $1/p_2=1/p_1= 1$, $1/q=0$.

\textbf{Case 2.4.} $1/p_2=1/q=0$, $1/p_1\geq 1$. In this case, we have
\begin{equation}
  \begin{split}
  \|\Box_k^{\alpha_1}\|_{M_2}
  = &
  \sup_{l\in \Gamma_k^{\alpha_2,\alpha_1}}\|\Box_l^{\alpha_2}\Box_k^{\alpha_1}f\|_{L^{\infty}}
  \\
  \lesssim &
  2^{jn\alpha_2/p_1}\sup_{l\in \Gamma_k^{\alpha_2,\alpha_1}}\|\Box_l^{\alpha_2}\Box_k^{\alpha_1}f\|_{L^{p_1}}
  \\
  \lesssim &
  2^{jn\alpha_2/p_1}2^{jn(\alpha_2-\alpha_1)(1-1/p_1)}\|\Box_k^{\alpha_1}f\|_{L^{p_1}}
    \\
  \sim &
  2^{j\widetilde{A_2}(\mathbf{p},q,\alpha_1,\alpha_2)}\|\Box_k^{\alpha_1}f\|_{L^{p_1}}
  \lesssim
  2^{j\widetilde{A_2}(\mathbf{p},q,\alpha_1,\alpha_2)}\|f\|_{M_1}.
  \end{split}
\end{equation}
We also notice that $\widetilde{A_2}(\mathbf{p},q,\alpha_1,\alpha_2)=\widetilde{A_1}(\mathbf{p},q,\alpha_1,\alpha_2)$
at $1/p_2=1/q=0$, $1/p_1= 1$.

Combining with the estimates of Case 2.1, Case 2.2, Case 2.3, Case 2.4, we use the
interpolation theory to obtain the upper bound estimates for $\widetilde{S_1}$.

For $\widetilde{S_{2}}$, we want to verify
\begin{equation}
\left\Vert \Box _{k}^{\alpha _{1}}|~M_{1}\rightarrow M_{2}\right\Vert
\lesssim \langle k\rangle ^{\frac{\widetilde{A_{2}}(\mathbf{p},q,\alpha _{1},\alpha _{2})}{1-\alpha _{1}}}.
\end{equation}
To this end, we need the estimates in the following Case 2.5 and Case 2.6.

\textbf{Case 2.5.} $1/p_2=1/p_1=1/2$, $1/q\geq 1/2$. By the H\"{o}lder's inequality, we deduce
\begin{equation}
  \begin{split}
    \|\Box_k^{\alpha_1}\|_{M_2}
    = &
    \left(\sum_{l\in \Gamma_k^{\alpha_2,\alpha_1}}\|\Box_l^{\alpha_2}\Box_k^{\alpha_1}f\|^q_{L^2}\right)^{1/q}
    \\
    \lesssim &
    |\Gamma_k^{\alpha_2,\alpha_1}|^{1/q-1/2}\left(\sum_{l\in \Gamma_k^{\alpha_2,\alpha_1}}\|\Box_l^{\alpha_2}\Box_k^{\alpha_1}f\|^2_{L^2}\right)^{1/2}
    \\
    \sim &
    2^{jn(\alpha_1-\alpha_2)(1/q-1/2)}\|\Box_k^{\alpha_1}f\|_{L^2}
    \\
    \lesssim &
    2^{jn(\alpha_1-\alpha_2)(1/q-1/2)}\|f\|_{M_1}.
  \end{split}
\end{equation}
Moreover, we have
\begin{equation}
  2^{jn(\alpha_1-\alpha_2)(1/q-1/2)}=2^{j\widetilde{A_2}(\mathbf{p},q,\alpha_1,\alpha_2)}=2^{j\widetilde{A_3}(\mathbf{p},q,\alpha_1,\alpha_2)}
\end{equation}
in this case.

\textbf{Case 2.6.} $1/p_2=0$, $1/p_1=1/2$, $1/q\geq 1/2$.
Using Lemma \ref{embedding of Lp with Fourier compact support}
\begin{equation}
  \begin{split}
    \|\Box_k^{\alpha_1}f\|_{M_2}
    = &
    \left(\sum_{l\in \Gamma_k^{\alpha_2,\alpha_1}}\|\Box_l^{\alpha_2}\Box_k^{\alpha_1}f\|^q_{L^{\infty}}\right)^{1/q}
    \\
    \lesssim &
    2^{jn\alpha_2/2}\left(\sum_{l\in \Gamma_k^{\alpha_2,\alpha_1}}\|\Box_l^{\alpha_2}\Box_k^{\alpha_1}f\|^q_{L^{2}}\right)^{1/q}.
  \end{split}
\end{equation}
Then, we use the conclusion of Case 2.5 to deduce
\begin{equation}
  \begin{split}
    2^{jn\alpha_2/2}\left(\sum_{l\in \Gamma_k^{\alpha_2,\alpha_1}}\|\Box_l^{\alpha_2}\Box_k^{\alpha_1}f\|^q_{L^{2}}\right)^{1/q}
    \lesssim
    2^{jn\alpha_2/2}2^{jn(\alpha_1-\alpha_2)(1/q-1/2)}\|f\|_{M_1}.
  \end{split}
\end{equation}
It follows that
\begin{equation}
  \begin{split}
    \|\Box_k^{\alpha_1}f\|_{M_2}
    \lesssim
    2^{jn\alpha_2/2}2^{jn(\alpha_1-\alpha_2)(1/q-1/2)}\|f\|_{M_1}.
  \end{split}
\end{equation}
Moreover, we have
\begin{equation}
  2^{jn\alpha_2/2}2^{jn(\alpha_1-\alpha_2)(1/q-1/2)}=2^{j\widetilde{A_2}(\mathbf{p},q,\alpha_1,\alpha_2)}=2^{j\widetilde{A_3}(\mathbf{p},q,\alpha_1,\alpha_2)}
\end{equation}
in this case.

Combining with the estimates of Case 2.3, Case 2.4, Case 2.5 and Case 2.6, we obtain the upper bound estimates for $\widetilde{S_2}$.
In addition, we use the estimates of Case 2.2, Case 2.5 and Case 2.6 to deduce
\begin{equation}
\left\Vert \Box _{k}^{\alpha _{1}}|~M_{1}\rightarrow M_{2}\right\Vert
\lesssim \langle k\rangle ^{\frac{\widetilde{A_{3}}(\mathbf{p},q,\alpha _{1},\alpha _{2})}{1-\alpha _{1}}}.
\end{equation}
for $\widetilde{S_3}$.

\end{proof}

\section{The embedding  relations between $\protect\alpha$-modulation spaces}

For $0<p_1, p_2, q\leqslant\infty$ and $(\alpha_1,\alpha_2)\in[0,1]\times[0,1%
]$, we recall that
\begin{equation*}
R (\mathbf{p},\mathbf{q};\alpha_1,\alpha_2)=
\begin{cases}
A (\mathbf{p},q_1;\alpha_1,\alpha_2),~ \text{if}~ \alpha_1\leq \alpha_2, \\
A (\mathbf{p},q_2;\alpha_1,\alpha_2),~ \text{if}~ \alpha_1\geq \alpha_2.%
\end{cases}%
\end{equation*}
After the preparation in the last two sections, we are now in a position to
give the final proof of Theorem \ref{sharpness of embedding} in the
following. \newline
\textbf{Proof of Theorem \ref{sharpness of embedding}.} We only show the
proof for $\alpha_1,\alpha_2<1$, the other cases can be treated similarly.

Firstly, we claim that $1/p_2 \leq 1/p_1$ is necessary if the embedding
relation holds. In fact, we can choose a smooth function $h$ whose Fourier
transform $\widehat{h}$ has small compact support, and denote $\widehat{%
h_{\lambda}}(\xi)=\widehat{h}(\frac{\xi}{\lambda}) .$ Then the embedding $%
M_{p_1,q_1}^{s_1,\alpha_1}\subseteq M_{p_2,q_2}^{s_2,\alpha_2}$ implies
\begin{equation}
\|h_{\lambda}\|_{L^{p_2}}\lesssim \|h_{\lambda}\|_{L^{p_1}}
\end{equation}
as $\lambda \rightarrow 0$, which implies $1/p_2 \leq 1/p_1$.

On the other hand, one can easily verify that
\begin{equation*}
\left\|\Box_k^{\alpha_2}|~M_{p_1,q_1}^{0,\alpha_1}\rightarrow
M_{p_2,q_2}^{0,\alpha_2}\right\|\sim
\left\|\Box_k^{\alpha_2}|~M_{p_1,q_1}^{0,\alpha_1}\rightarrow
M_{p_2,q_1}^{0,\alpha_2}\right\|.
\end{equation*}
for $\alpha_1\leq \alpha_2$, and
\begin{equation*}
\left\|\Box_k^{\alpha_1}|~M_{p_1,q_1}^{0,\alpha_1}\rightarrow
M_{p_2,q_2}^{0,\alpha_2}\right\|\sim
\left\|\Box_k^{\alpha_1}|~M_{p_1,q_2}^{0,\alpha_1}\rightarrow
M_{p_2,q_2}^{0,\alpha_2}\right\|.
\end{equation*}
for $\alpha_1\geq \alpha_2$. So Lemma \ref{asymptotic estimates} implies
\begin{equation}
\left\|\Box_k^{\alpha_1\vee \alpha_2}|~M_{p_1,q_1}^{0,\alpha_1}\rightarrow
M_{p_2,q_2}^{0,\alpha_2}\right\| \sim \langle k\rangle^{\frac{R(\mathbf{p},%
\mathbf{q},\alpha_1,\alpha_2)}{1-\alpha_1\vee \alpha_2}}.
\end{equation}
Now, we divide the relations between $1/q_2$ and $1/q_1$ into two cases.%
\newline
\textbf{Case One:~$1/q_2\leq 1/q_1$.} \newline
In this case, we have $\mathcal{M}_{p}(l_{q_1}^{s_1,\alpha_1\vee \alpha_2},
l_{q_2}^{s_2,\alpha_1\vee \alpha_2})=l_{\infty}^{s_2-s_1,\alpha_1\vee
\alpha_2}$ and
\begin{equation}  \label{for proof 2}
\big\|\{\langle k\rangle^{\frac{R(\mathbf{p},\mathbf{q},\alpha_1,\alpha_2)}{%
1-\alpha_1\vee \alpha_2}}\}\big\|_{l_{\infty}^{s_2-s_1,\alpha_1\vee
\alpha_2}} =\sup_{k\in \mathbb{Z}^n}\langle k\rangle^{\frac{s_2-s_1}{%
1-\alpha_1\vee \alpha_2}}\langle k\rangle^{\frac{R(\mathbf{p},\mathbf{q}%
,\alpha_1,\alpha_2)}{1-\alpha_1\vee \alpha_2}}.
\end{equation}
Thus, it is obvious that $s_2+R(\mathbf{p},\mathbf{q},\alpha_1,\alpha_2)\leq
s_1$ is the sufficient and necessary condition for the boundedness of (\ref%
{for proof 2}). \newline
\textbf{Case Two:~$1/q_2> 1/q_1$.} \newline
In this case, we have $\mathcal{M}_{p}(l_{q_1}^{s_1,\alpha_1\vee \alpha_2},
l_{q_2}^{s_2,\alpha_1\vee \alpha_2})=l_{r}^{s_2-s_1,\alpha_1\vee \alpha_2}$,
where $1/r=1/q_2-1/q_1.$
\begin{equation}  \label{for proof 3}
\big\|\{\langle k\rangle^{\frac{R(\mathbf{p},\mathbf{q},\alpha_1,\alpha_2)}{%
1-\alpha_1\vee \alpha_2}}\}\big\|_{l_{r}^{s_2-s_1,\alpha_1\vee \alpha_2}}
=\left(\sum_{k\in \mathbb{Z}^n}\langle k\rangle^{r\big[\frac{s_2-s_1}{%
1-\alpha_1\vee \alpha_2}+\frac{R(\mathbf{p},\mathbf{q},\alpha_1,\alpha_2)}{%
1-\alpha_1\vee \alpha_2}\big]}\right)^{1/r}.
\end{equation}
One can easily verify that $s_2+R(\mathbf{p},\mathbf{q},\alpha_1,\alpha_2)+%
\frac{n(1-\alpha_1\vee\alpha_2)}{q_2}< s_1+\frac{n(1-\alpha_1\vee\alpha_2)}{%
q_1}$ is the sharp condition for the boundedness of (\ref{for proof 3}).

We now complete the proof of Theorem \ref{sharpness of embedding} with the
aid of Corollary \ref{reduction of the embedding}.

\noindent \textbf{ Acknowledgements.}\quad This work is partially supported by the NNSF of China (Grant Nos. 11201103, 11371295, 11471041).

\bigskip

\end{document}